\def\h{ {\cal H} }
\def\l{ {\cal L} }
\def\b{ {\cal B} }
\def\t{ {\cal T} }
\def\s{ {\cal S} }
\def\p{ {\cal P} }
\def\k{ {\cal K} }
\def\f{ {\cal F} }
\def\sinc{ \hbox{sinc} }
\newtheorem{teo}{Theorem}[section]
\newtheorem{prop}[teo]{Proposition}
\newtheorem{lem}[teo]{Lemma}
\newtheorem{coro}[teo]{Corollary}
\theoremstyle{definition}
\newtheorem{rem}[teo]{Remark}
\newtheorem{ejem}[teo]{Example}
\newtheorem{ejems}[teo]{Examples}
\title{Schmidt decomposable products of projections}
\author{Esteban Andruchow and Gustavo Corach}
\begin{document}

\maketitle 

\begin{abstract}
We characterize operators $T=PQ$ ($P,Q$ orthogonal projections in a Hilbert space $\h$) which have a singular value decomposition. A spatial characterizations is given: this condition occurs if and only if there exist orthonormal bases $\{\psi_n\}$ of $R(P)$ and $\{\xi_n\}$ of $R(Q)$ such that 
$\langle\xi_n,\psi_m\rangle=0$ if $n\ne m$. Also it is shown that this is equivalent  to $A=P-Q$ being diagonalizable. Several examples are studied, relating Toeplitz, Hankel and Wiener-Hopf operators to this condition. We also examine the relationship with the differential geometry of the Grassmann manifold of underlying the Hilbert space: if $T=PQ$ has a singular value decomposition, then the generic parts of $P$ and $Q$ are joined by a minimal geodesic with diagonalizable exponent.
\end{abstract}

\bigskip

{\bf 2010 MSC:}  47A05, 47A68, 47B35, 47B75.

{\bf Keywords:}  Projections, products of projections, differences of projections.

\section{Introduction}

Let $\h$ be a Hilbert space, $\b(\h)$ the space of bounded linear operators, $\p(\h)\subset \b(\h)$ the set of orthogonal projections. In what follows $R(T)$ denotes the the range of $T\in\b(\h)$ and $N(T)$ its nullspace. Given a closed subspace $\s\subset \h$, the orthogonal projection  onto $\s$ is denoted by $P_\s$. In this paper we study  part of the set $\p\cdot\p=\{PQ: P,Q\in\p(\h)\}$, namely, the subset of all $T=PQ$ such that $T^*T=PQP$ is diagonalizable.   
Operators in $\p\cdot\p$ are special cases of generalized Toeplitz operators as well as of Wiener-Hopf operators. As we shall see in a section of examples,  they give rise to classical Toeplitz and Wiener-Hopf operators. Therefore this paper can be regarded as the study of operators in these classes, having a diagonal structure.

Also this paper is a kind of sequel to  \cite{cm}, \cite{incertidumbre} and \cite{jot}, the first concerned with the whole set $\p\cdot\p$, the other two with $\p\cdot\p\cap \k(\h)$, where $\k(\h)$ denotes the ideal of compact operators acting in $\h$. Compact operators $T$ satisfy that $T^*T$ is diagonalizable. 

We shall say that $ T$ is S-decomposable if it has a  singular value (or Schmidt) decomposition \cite{schmidt},
\begin{equation}\label{S}
T=\sum_{n\ge 1} s_n\langle\ \ \ , \xi_n\rangle\psi_n=\sum_{n\ge 1}s_n \psi_n\otimes\xi_n,
\end{equation}
where $\{\xi_n: n\ge 1\}$ and $\{\psi_n: n\ge 1\}$ are orthonormal systems, and $s_n>0$. In this case, $\{\psi_n\}$, $\{xi_n\}$ are orthonormal bases of $\overline{R(T)}$, $N(T)^\perp$, respectively and $T\xi_n=s_n\psi_n$, $T^*\psi_n=s_n\xi_n$, $T^*T\xi_n=s_n^2\xi_n$, $TT^*\psi_n=s_n^2\psi_n$ for all $n\ge 1$.

Clearly, $T$ is S-decomposable if and only if $T^*T$ (equivalently $TT^*$) is diagonalizable, if and only if $T^*$ is S-decomposable. Also it is clear that if $U, V$ are unitary operators, $T$ is S-decomposable if and only if $UTV$ is S-decomposable.

This paper is devoted to the study of the operators $T\in\p\cdot\p$ which are S-decomposable.

Let us describe the contents of the paper. In Section 2 we prove that $T=PQ$ is S-decomposable if and only if there exist orthonormal bases $\{\xi_n\}$  of $R(P)$ and $\{\psi_n\}$ of $R(Q)$ such that $\langle\xi_n,\psi_m\rangle\ne 0$ if $n\ne m$. We also prove that $T=PQ$ is S-decomposable if and only if $A=P-Q$ is diagonalizable. This result is based on a Theorem by Chandler Davis (\cite{davis}, Theorem 6.1), which characterizes operators which are the difference of two projections. A recent treatment of these operators can be found in \cite{pemenoscu}. The S-decomposability of $PQ$ is equivalent to that of $P(1-Q)$, $(1-P)Q$ and $(1-P)(1-Q)$. As a corollary we prove that $P-Q$ is diagonalizable if and only if $P+Q$ is, the eigenvalues $\pm\lambda_n$ of $P-Q$ which are different
 from $0,1$ correspond with the eigenvalues $1\pm(1-\lambda_n)^2$, with the same multiplicity.
Section 3 contains several interesting classes of examples of S-decomposable operators in $\p\cdot\p$. If $\h=L^2(\mathbb{R}^n)$ and $I, J\subset \mathbb{R}^n$ are Lebesgue measurable set with finite positive measure, define $P_If=\chi_If$ and $Q_Jf=(P_J\hat{f} )\check{}$, for $f\in\h$. Here $\chi_A$ denotes the characteristic function of $A\subset \mathbb{R}^n$ and $\hat{}$, $\check{}$ denote the Fourier-Plancherel transform and its inverse. The product $P_IQ_J$ is a proper Wiener-Hopf operator, is also known as a concentration operator, and its study is related to mathematical formulations of the Heisenberg uncertainty principle. The reader is referred to \cite{sp}, \cite{lenard}, \cite{donoho}, \cite{folland}  for results concerning these products. Under the conditions described above, $P_IQ_J$ is a Hilbert-Schmidt operator, thus S-decomposable. This implies that also $P_{I'}Q_{J'}$ is S-decomposable (but non compact) when $I'$ or $J'$ have co-finite measure. It should be mentioned that the spectral  description of the $P_IQ_J$ is no easy task (see \cite{sp} for the case when $I$, $J$ are intervals in $\mathbb{R}$). Another interesting family of examples  is obtained if $\h=L^2(\mathbb{T})$ and $\h$ is decomposed as $\h=\h_+\oplus\h_-$, where $\h_+=H^2(\mathbb{T})$.  If $\varphi$, $\psi$ are continuous functions with modulus one, put $P=1-P_{\varphi\h_+}$ and $Q=P_{\psi\h_+}$. Then $PQ$ is a unitary operator times a Hankel operator with continuous symbol, and therefore a compact operator by a Theorem  by Hartman \cite{hartman}. Then $(1-P)Q$ is a unitary operator times a Toeplitz operator, and a non compact S-decomposable operator. On the other hand, using a result by Howland (\cite{howland}, Theorem 9.2), one can find convenient non-continuous  $\varphi$, $\psi$ such that $PQ$ is not S-decomposable.

In Section 4 we prove that, for two closed subspaces  $\s,\t$ of $\h$, the operator $T=P_\s P_\t$ is S-decomposable if and only if there exist  isometries $X,Y:\ell^2\to\h$ with $R(X)=\s$, $R(Y)=\t$ such that $X^*Y\in\b(\ell^2)$ is a diagonal matrix.

In Section 5 we characterize S-decomposability in terms of what we call Davis' symmetry $V$: given two projections $P,Q$, the decomposition $\h=N(P+Q-1)\oplus N(P+Q-1)^\perp$ reduces simultaneously $P$ and $Q$. They act non trivially on the second subspace $\h'=N(P+Q-1)^\perp$. Denote by $P'$ and $Q'$ the restrictions of $P$ and $Q$ to this subspace. Then the isometric part in the polar decompostion of $P'+Q'-1$ is a selfadjoint unitary operator $V$ which satisfies $VP'V=Q'$, $VQ'V=P'$. We relate these operator with the differential geometry of the space $\p(\h')$  of projections in $\h'$  (or Grassmann manifold of $\h'$). Specifically, with the unique short geodesic curve joining $P'$ and $Q'$ in $\p(\h')$.  For instance, it is shown that $PQ$ is S-decomposable if and only if the velocity operator of the unique geodesic joining $P'$ and $Q'$ is diagonalizable.

In Section 5 it is shown that any contraction $\Gamma\in\b(\h)$ is the $1,1$ entry of a unitary operator times a product of projections acting in $\h\times \h$.

\section{Products and differences of projections}

If $T\in\p\cdot\p$, then $T=P_{\overline{R(T)}}P_{N(T)^\perp}$. This is a result of T. Crimmins (unpublished; there is a proof in \cite{Radjavi-Williams}  Theorem 8). Moreover, Crimmins proved that $T\in\b(\h)$ belongs to $\p\cdot\p$ if and only if $TT^*T=T^2$ \cite{Radjavi-Williams}. However, the factorization $T=P_{\overline{R(T)}}P_{N(T)^\perp}$ is one among among many others. In \cite{cm}, Theorem 3.7, it is proved that if $T\in\p\cdot\p$, then $T=P_\s P_\t$ if and only if 
$$
\overline{R(T)}\subset\s \ , \  N(T)^\perp\subset\t \hbox{  and  } (\s\ominus\overline{R(T)}) \oplus  (\t\ominus N(T)^\perp)\subset R(T)^\perp\cap N(T).
$$
In \cite{cm}, for any $T\in\p\cdot\p$ the set of all pairs $(\s,\t)$ of closed subspaces such that $T=P_\s P_\t$ is denoted by ${\cal X}_T$.  Our first result is a characterization of ${\cal X}_T$ for S-decomposable $T$. The proof is essentially that of Theorem 4.1 in \cite{jot}, where $T$ is supposed to be a compact element of $\p\cdot \p$. We include a proof for the reader's convenience.

\begin{teo}\label{biorto}
Let $\s, \t\subset\h$ be closed subspaces  of $\h$. Then 
$T=P_\s P_\t$ is S-decomposable if and only if there exist orthonormal bases $\{\psi_n: n\ge 1\}$ of $\s$,  $\{\xi_n: n\ge 1\}$ of $\t$  such that $\langle\xi_n,\psi_m\rangle=0$ if $n\ne m$. In such case, the numbers  $|\langle\xi_n, \psi_n\rangle|$ are the singular values of $T$.
\end{teo}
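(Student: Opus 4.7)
For the forward direction, let $T=\sum_n s_n\,\psi_n\otimes\xi_n$ be an S-decomposition, so that $\{\psi_n\}$ is an ONB of $\overline{R(T)}\subset\s$, $\{\xi_n\}$ is an ONB of $N(T)^\perp\subset\t$, and $s_n>0$. Using $T^*\psi_n=P_\t P_\s\psi_n=P_\t\psi_n=s_n\xi_n$ together with $P_\s\psi_m=\psi_m$ and the factorization $TT^*=P_\s P_\t P_\s$, I would compute
\[
\langle\xi_n,\psi_m\rangle=\frac{1}{s_n}\langle P_\t\psi_n,\psi_m\rangle=\frac{1}{s_n}\langle P_\s P_\t P_\s\psi_n,\psi_m\rangle=\frac{1}{s_n}\langle TT^*\psi_n,\psi_m\rangle=s_n\,\delta_{nm},
\]
since $TT^*\psi_n=s_n^2\psi_n$. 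Thus the Schmidt families themselves already enjoy the desired biorthogonality and satisfy $|\langle\xi_n,\psi_n\rangle|=s_n$.

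To promote these to bases of $\s$ and $\t$, set $\s_0:=\s\ominus\overline{R(T)}$ and $\t_0:=\t\ominus N(T)^\perp$. The description of ${\cal X}_T$ recalled just above the theorem yields $\s_0\oplus\t_0\subset R(T)^\perp\cap N(T)=N(T^*)\cap N(T)$. For $\phi\in\s_0$, $P_\s\phi=\phi$ and $\phi\in N(T^*)$ give $P_\t\phi=P_\t P_\s\phi=T^*\phi=0$, so $\s_0\perp\t$; by symmetry $\t_0\perp\s$, and in particular $\s_0\perp\t_0$. Choose arbitrary ONBs of $\s_0$ and $\t_0$ and append them to the Schmidt families (indexing the Schmidt vectors first). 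All new cross inner products vanish by the orthogonalities just established, so the resulting ONBs of $\s$ and $\t$ still satisfy $\langle\xi_n,\psi_m\rangle=0$ for $n\ne m$; moreover $|\langle\xi_n,\psi_n\rangle|=s_n$ on the Schmidt part and $=0$ on the appended part, which is exactly the list of singular values of $T$.

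For the converse, assume ONBs $\{\psi_n\}$ of $\s$ and $\{\xi_n\}$ of $\t$ with $\langle\xi_n,\psi_m\rangle=0$ for $n\ne m$. Then
\[
P_\s\xi_n=\sum_m\langle\xi_n,\psi_m\rangle\psi_m=\langle\xi_n,\psi_n\rangle\psi_n,
\]
so for every $f\in\h$,
\[
Tf=P_\s P_\t f=\sum_n\langle f,\xi_n\rangle P_\s\xi_n=\sum_n\langle\xi_n,\psi_n\rangle\langle f,\xi_n\rangle\psi_n.
\]
Writing $\langle\xi_n,\psi_n\rangle=e^{i\theta_n}s_n$ with $s_n=|\langle\xi_n,\psi_n\rangle|\ge 0$ and replacing each $\psi_n$ by $e^{i\theta_n}\psi_n$ (still an ONB of $\s$), the sum restricted to the indices with $s_n>0$ is a valid S-decomposition of $T$, and its singular values are $|\langle\xi_n,\psi_n\rangle|$, as claimed.

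The only real obstacle I foresee is the cosmetic bookkeeping of indices when extending the Schmidt bases if $\dim\s\ne\dim\t$ (one or both being possibly finite): one must align the two families so that Schmidt indices come first and label the ``kernel'' directions afterwards. This is purely combinatorial once the orthogonalities $\s_0\perp\t$ and $\t_0\perp\s$ are in hand, so it does not pose a genuine mathematical difficulty.
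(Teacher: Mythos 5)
Your proposal is correct, and its overall architecture matches the paper's: the direction ``bases $\Rightarrow$ S-decomposable'' is handled by exactly the same computation $P_\s\xi_n=\langle\xi_n,\psi_n\rangle\psi_n$ followed by a phase adjustment, and the direction ``S-decomposable $\Rightarrow$ bases'' proceeds by proving biorthogonality of the Schmidt vectors and then extending to full bases of $\s$ and $\t$. The one step where you genuinely diverge is the extraction of the biorthogonality: the paper invokes Crimmins' identity $TT^*T=T^2$, expands both sides in the Schmidt system, and compares coefficients to conclude $\langle\xi_n,\psi_m\rangle=s_n\delta_{nm}$, whereas you use the factorization directly, via $T^*\psi_n=P_\t\psi_n=s_n\xi_n$ and $TT^*=P_\s P_\t P_\s$, to compute $\langle\xi_n,\psi_m\rangle=\frac{1}{s_n}\langle TT^*\psi_n,\psi_m\rangle=s_n\delta_{nm}$ in one line. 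Your route is arguably more transparent (no comparison of doubly indexed series, no appeal to the characterization of $\p\cdot\p$), at the cost of using the specific projections $P_\s,P_\t$ rather than only the intrinsic identity $TT^*T=T^2$. Your treatment of the extension step is also slightly sharper than the paper's: you establish $\s\ominus\overline{R(T)}\perp\t$ and $\t\ominus N(T)^\perp\perp\s$ (not merely the mutual orthogonality of these two defect spaces), which is what is actually needed to kill all the new cross inner products, including those against the Schmidt vectors themselves. The only cosmetic caveat is that the appended basis vectors contribute $|\langle\xi_n,\psi_n\rangle|=0$, which strictly speaking is not a singular value under the convention $s_n>0$; this imprecision is present in the statement itself and is harmless.
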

\begin{proof}
Suppose that $\{\psi_n\}$, $\{\xi_n\}$ are orthonormal bases of $\s$, $\t$ respectively, such that 
$$
\langle\psi_n,\xi_m\rangle=0 \ \hbox{ for } n\ne m.
$$
Therefore
$$
P_\s P_\t=(\sum_{n\ge 1} \langle\  \ \ , \psi_n\rangle\psi_n)(\sum_{m\ge 1} \langle\ \ \ , \xi_m\rangle\xi_m)=\sum_{n\ge 1}\langle\psi_n,\xi_n\rangle\psi_n\otimes\xi_n.
$$
In order to get the Schmidt decomposition of $P_\s P_\t$, we only need to replace  $\langle\psi_n,\xi_n\rangle$ by the appropriate sequence of positive numbers: write $\langle\psi_n,\xi_n\rangle=e^{i\theta_n}|\langle\psi_n,\xi_n\rangle|$ and replace $\psi_n$ by $\psi'_n=e^{-i\theta_n}\psi_n$. Then $\{\psi'_n\}$ is still an orthonormal basis of $\s$, and 
$$
\langle\psi'_n,\xi_n\rangle=|\langle\psi_n,\xi_n\rangle|=s_n
$$ 
are the singular values in the decomposition
$$
P_\s P_\t=\sum_{n\ge 1}|\langle\psi_n,\xi_n\rangle|\psi'_n\otimes\xi_n.
$$
This shows that $P_\s P_\t$ is S-decomposable.

Conversely, if $T=P_\s P_\t$ is S-decomposable it has a singular value decomposition 
$$
T=\sum_{n\ge 1} s_n \psi_n\otimes \xi_n
$$
and it holds that $T^2=TT^*T$. Then 
$$
T^*=\sum_{n\ge 1} s_n \langle\ \  ,\psi_n\rangle \xi_n \ \ ,  \ TT^*T=\sum_{n\ge 1} s^3_n \langle\ ,\xi_n\rangle \psi_n,
$$ 
and
$$
T^2=\sum_{n,m\ge 1} s_ns_m \langle\psi_n,\xi_n\rangle\langle\ ,\xi_n\rangle\psi_n=\sum_{n\ge 1}s_n \left( \sum_{m\ge 1} \langle\ ,s_m\langle\xi_n,\psi_m\rangle\xi_m\right) \psi_n.
$$
Using $TT^*T=T^2$ we get, for each $n\ge 1$
$$
\sum_{m\ge 1} s_ns_m\langle\xi_n,\psi_m\rangle\xi_m=s_n^3\xi_n.
$$
Then  $\langle\xi_n,\psi_m\rangle=0$ if $n\ne m$ and $s_n=\langle\xi_n,\psi_n\rangle$. Finally, we can extend the orthonormal bases $\{\psi_n\}$ of $\overline{R(T)}$ and $\{\xi_n\}$ of $N(T)^\perp$ to orthonormal bases of $\s$ and $\t$. In fact, if $\psi\in \s\ominus \overline{R(T)}$ and $\xi\in\t\ominus N(T)^\perp$, then 
$$
\langle\psi,\xi\rangle=0,
$$
because  
$$
(\s\ominus\overline{R(T)}) \oplus  (\t\ominus N(T)^\perp)\subset R(T)^\perp\cap N(T).
$$
\end{proof}

Next, we show that $T=PQ$ is S-decomposable if and only if $A=P-Q$ is diagonalizable, and establish the relation between the singular values of $T$ and the eigenvalues of $A$. We present this equivalence as two separate theorems, to avoid  too long a statement.

\begin{teo}\label{A diagonal}
Suppose that $T=PQ$ is S-decomposable with singular values $s_n$. Then $A=P-Q$ is diagonalizable, with eigenvalues $\pm(1-s_n^2)^{1/2}$, $n\ge 1$, plus, eventually, $0, -1$ and $1$.
\end{teo}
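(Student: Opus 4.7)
The strategy is to invoke Theorem \ref{biorto} to reduce $A=P-Q$ to a direct sum of small, explicit pieces on which $A$ can be diagonalized by hand. Applying that theorem to $\s=R(P)$, $\t=R(Q)$, I obtain orthonormal bases $\{\psi_n\}$ of $R(P)$ and $\{\xi_n\}$ of $R(Q)$ with $\langle\xi_n,\psi_m\rangle=0$ whenever $n\ne m$; after the unit-phase adjustments used in the proof of Theorem \ref{biorto}, I may assume $\langle\xi_n,\psi_n\rangle=t_n\in[0,1]$, and the positive $t_n$'s are exactly the singular values $s_n$ of $T$. Note that $t_n=1$ forces $\psi_n=\xi_n\in R(P)\cap R(Q)$ (since $\|PQ\xi_n\|=1$ while $P,Q$ are contractions), whereas the vectors with $t_n=0$ are those appended to the Schmidt ONBs of $\overline{R(T)}$ and $N(T)^\perp$ in order to extend them to bases of $R(P)$ and $R(Q)$; by the computation at the end of the proof of Theorem \ref{biorto}, these appended vectors lie in $R(P)\cap N(Q)$ and $R(Q)\cap N(P)$ respectively.

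Next I would group the basis vectors into pairwise orthogonal blocks: for each $n$ with $0<s_n<1$, the two-dimensional $\h_n=\mathrm{span}\{\psi_n,\xi_n\}$; for each $n$ with $s_n=1$, the one-dimensional ray $\mathrm{span}\{\psi_n\}\subset R(P)\cap R(Q)$; the subspace $R(P)\cap N(Q)$ spanned by the extra $\psi$-vectors; the subspace $R(Q)\cap N(P)$ spanned by the extra $\xi$-vectors; and finally the orthogonal complement of their sum, $(R(P)+R(Q))^\perp=N(P)\cap N(Q)$. Mutual orthogonality comes from the biorthogonality of $\{\psi_n\}$ with $\{\xi_n\}$ together with the relation $R(P)\cap N(Q)\perp R(Q)\cap N(P)$ already recorded in the proof of Theorem \ref{biorto}.

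On each block $A=P-Q$ is immediate: $A=0$ on $N(P)\cap N(Q)$ and on the $s_n=1$ rays; $A=I$ on $R(P)\cap N(Q)$; $A=-I$ on $R(Q)\cap N(P)$. On a two-dimensional block $\h_n$ with $0<s_n<1$, the ONB $e_1=\psi_n$, $e_2=(1-s_n^2)^{-1/2}(\xi_n-s_n\psi_n)$ represents $P-Q$ by a selfadjoint $2\times 2$ matrix whose trace is $0$ (both $P$ and $Q$ restrict to rank-one projections) and whose determinant is $-(1-s_n^2)$; its eigenvalues are therefore $\pm(1-s_n^2)^{1/2}$. Assembling the eigenvectors from every block yields an orthonormal diagonalizing basis for $A$ with the spectrum stated in the theorem. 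The only real obstacle is bookkeeping: verifying that the five families above really exhaust $\h$, which reduces to the orthogonality $R(P)\cap N(Q)\perp R(Q)\cap N(P)$ and the elementary observation that $\{\psi_n\}\cup\{\xi_n\}$ spans $R(P)+R(Q)$ densely.
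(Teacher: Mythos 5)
Your proof is correct and follows essentially the same route as the paper: both decompose $\h$ into the two-dimensional spans of the biorthogonal pairs $\{\psi_n,\xi_n\}$ with $0<s_n<1$ together with the four intersections $R(P)\cap R(Q)$, $R(P)\cap N(Q)$, $N(P)\cap R(Q)$, $N(P)\cap N(Q)$, and diagonalize $A$ block by block. The only cosmetic difference is that you extract the eigenvalues $\pm(1-s_n^2)^{1/2}$ from the trace and determinant of each $2\times 2$ block, where the paper exhibits the explicit eigenvectors $\nu_k,\omega_k$.
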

\begin{proof}
Put as above $T=\sum_{n\ge 1} s_n \psi_n\otimes \xi_n$, with $\xi_n\in R(Q)$ and $\psi_n\in R(P)$.
First note that $s_n\le s_1=\|T\|\le\|P\|\|Q\|\le 1$. Moreover, $s_1=1$ means that $T\xi_1=\eta_1$ and thus 
$\|P(Q\xi_1)\|=1=\|\xi_1\|\ge \|Q\xi_1\|\ge \|P(Q\xi_1)\|$, i.e., $\xi_1\in R(Q)$and  $Q\xi_1=\xi_1\in R(P)$. Then $\xi_1=\psi_1$. The same happens for all $n$ such that $s_n=1$: the associated vectors $\xi_n=\psi_n$ generate $R(P)\cap R(Q)$. Note that $A=P-Q$ is trivial in this subspace. 

Suppose that $s_k<1$. Apparently, 
$$
A\xi_k=P\xi_k-Q\xi_k=PQ\xi_k-\xi_k=T\xi_k-\xi_k=s_k\psi_k-\xi_k
$$ 
and 
$$
A\psi_k=P\psi_k-Q\psi_k=P\psi_k-QP\psi_k=\psi_k-T^*\psi_k=\psi_k-s_k\xi_k.
$$
Then 
$$
A^2\xi_k=(1-s^2_k)\xi_k . \  \  A^2\psi_k=(1-s_k)^2\psi_k.
$$
Since $s_k=|\langle\xi_k,\psi_k\rangle|<1$ and $\|\xi_k\|=\|\psi_k\|=1$, it follows that $\xi_k,\psi_k$ span a two-dimensional eigenspace for $A^2$, with eigenvalue $1-s_k^2$.
Then 
$$
\nu_k=((1-s_k^2)^{1/2}-1)\xi_k +s_k\psi_k \hbox{ and } \omega_k=(-(1-s_k^2)^{1/2}-1)\xi_k+s_k\psi_k
$$ 
are orthogonal eigenvectors for $A$, with eigenvalues $(1-s_k^2)^{1/2}$ and $-(1-s_k^2)^{1/2}$, respectively.

The orthogonal systems $\xi_k$ and $\psi_k$ can be extended to orthonormal bases of $R(P)$ and $R(Q)$, respectively (as in the proof of Theorem \ref{biorto}). On the extension of the system $\xi_k$, i.e.,  $R(P)\ominus R(T)$, $A=P-Q$ equals $1$. On the extension of $\psi_k$, $R(Q)\ominus N(T)^\perp$, $A$ equals $-1$.  Together, these extended systems span $R(P)+R(Q)$, and here $A$ is diagonalizable. On the orthogonal complement of this subspace, namely $N(P)^\perp\cap N(Q)^\perp$, $A$ is trivial.
\end{proof}
\begin{rem}
Note that, except for $1$ and $-1$, the eigenvalues $(1-s_k^2)^{1/2}$ and $-(1-s_k^2)^{1/2}$ of $A$ have the same multiplicity. Also note that 
$$
N(A-1)=R(P)\cap N(Q) ,  \ \  N(A+1)=N(P)\cap R(Q),
$$
and $N(A)=R(P)\cap R(Q) \oplus N(P)\cap N(Q)$.
\end{rem}
The above result has a converse. In \cite{davis} Chandler Davis proved  that operators $A=P-Q$ are characterized as follows: in the generic part of $A$,
namely
$$
\h_0=\{N(A)\oplus N(A-1) \oplus N(A+1)\}^\perp,
$$
which reduces $P,Q$ and $A$, if we denote $P_0=P|_{\h_0}$, $Q_0=Q|_{\h_0}$ and
$$
A_0=A|_{\h_0}=P_0-Q_0,
$$
there exists a symmetry $V$ ($V^*=V^{-1}=V$) such that $VA=-AV$ and 
$$
P_0=P_V=\frac12\{1+A_0+V(1-A_0^2)^{1/2}\} \ \ , \ \  Q_0=Q_V=\frac12\{1-A_0+V(1-A_0^2)^{1/2}\}.
$$
$V$ is characterized by these properties. 
With these notations we have:
\begin{teo}\label{1.4}
If $A=P-Q$ is diagonalizable with (non nil) eigenvalues $\pm\lambda_n$ ($0<|\lambda_n|<1$) and $\pm 1$ , then $T=PQ$ is S-decomposable with singular values $(1-\lambda_n^2)^{1/2}$ and $1$.
\end{teo}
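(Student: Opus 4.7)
The plan is to use Davis' structure theorem to reduce to the generic part $\h_0$, where the explicit formulas for $P_0$ and $Q_0$ in terms of $V$ and $A_0$ allow the computation to be carried out on $2$-dimensional invariant subspaces.

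First, I decompose $\h=N(A)\oplus N(A-1)\oplus N(A+1)\oplus \h_0$; this simultaneously reduces $P$, $Q$ and $A$. By the remark above, $PQ$ vanishes on $N(A-1)=R(P)\cap N(Q)$ and on $N(A+1)=N(P)\cap R(Q)$, while on $N(A)=(R(P)\cap R(Q))\oplus(N(P)\cap N(Q))$ the operator $PQ$ acts as the identity on the first summand and as zero on the second. So these three summands contribute only the singular value $1$ (with multiplicity $\dim(R(P)\cap R(Q))$) together with zeros, and it remains to show that $T_0=P_0Q_0$ is S-decomposable on $\h_0$ with nonzero singular values $(1-\lambda_n^2)^{1/2}$.

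Since $A_0$ is self-adjoint and diagonalizable, its eigenspaces $E_{\pm\lambda_n}=\ker(A_0\mp\lambda_n)$ for distinct $\pm\lambda_n$ are pairwise orthogonal and exhaust $\h_0$ (we may list pairs so that $\lambda_n>0$). The Davis symmetry satisfies $VA_0=-A_0V$, so $V$ sends $E_{\lambda_n}$ into $E_{-\lambda_n}$, and since $V$ is a self-adjoint unitary the restriction is a unitary isomorphism. Fixing an orthonormal basis $\{e_{n,j}\}_j$ of $E_{\lambda_n}$, the family $\{Ve_{n,j}\}_j$ is an orthonormal basis of $E_{-\lambda_n}$, and the whole collection $\{e_{n,j},Ve_{n,j}\}_{n,j}$ is an orthonormal basis of $\h_0$, with mutually orthogonal two-dimensional subspaces $W_{n,j}=\mathrm{span}\{e_{n,j},Ve_{n,j}\}$.

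Setting $\nu_n=(1-\lambda_n^2)^{1/2}$ and using that $V$ commutes with $(1-A_0^2)^{1/2}$ (because it commutes with $A_0^2$), the Davis formulas give $P_0 e_{n,j}=\frac{1+\lambda_n}{2}e_{n,j}+\frac{\nu_n}{2}Ve_{n,j}$ and $P_0 Ve_{n,j}=\frac{\nu_n}{2}e_{n,j}+\frac{1-\lambda_n}{2}Ve_{n,j}$, with the analogous formulas for $Q_0$ obtained by replacing $\lambda_n$ by $-\lambda_n$. Hence each $W_{n,j}$ is $P_0$- and $Q_0$-invariant, and the restriction of $T_0^*T_0=Q_0P_0Q_0$ to $W_{n,j}$ is a self-adjoint $2\times 2$ matrix; a short direct computation shows its trace is $\nu_n^2$ and its determinant is $0$, so its eigenvalues are $\nu_n^2$ and $0$, and therefore the singular values of $T_0|_{W_{n,j}}$ are $\nu_n$ and $0$. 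Concatenating the Schmidt pairs across all $(n,j)$, together with the singular value $1$ coming from $R(P)\cap R(Q)$, yields the claimed Schmidt decomposition of $T$.

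The only real difficulty is bookkeeping: verifying that $\{e_{n,j},Ve_{n,j}\}$ really is an orthonormal basis of $\h_0$ (self-adjointness of $A_0$ makes distinct eigenspaces orthogonal, and $V$ being a self-adjoint unitary makes the pairing orthonormal), and checking that each $W_{n,j}$ is genuinely $P_0,Q_0$-invariant. Once these are in place the $2\times 2$ calculation is elementary, and consistency with Theorem \ref{A diagonal} is automatic from $\nu_n^2=1-\lambda_n^2$.
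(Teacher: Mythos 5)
Your proof is correct and follows essentially the same route as the paper: reduce to the generic part $\h_0$, use the Davis symmetry $V$ to pair the $\pm\lambda_n$-eigenvectors of $A_0$ into two-dimensional invariant subspaces, and read off the singular values from the resulting $2\times 2$ blocks. The only cosmetic difference is that you compute $P_0$ and $Q_0$ separately and then take trace and determinant of $Q_0P_0Q_0$ on each block, whereas the paper writes down the $2\times 2$ matrix of $P_0Q_0$ directly; the content is identical.
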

\begin{proof}
On the non generic parts $N(A-1) \oplus N(A+1)$, $T$ equals zero. In $N(A)=R(P)\cap R(Q) \oplus N(P)\cap N(Q)$, $T$ is
$$
1\oplus 0.
$$
Thus $PQ$ is diagonal (thus S-decomposable) in $\h_0^\perp$. In $\h_0$, after straightforward computations (note that $V$ commutes with $A_0^2$) one has
$$
P_0Q_0=P_VQ_V=\frac{V}{2}\{V(1-A_0^2)^{1/2}+1-A_0\}(1-A_0^2)^{1/2}.
$$
Since $A_0$ is diagonalizable, and there exists the symmetry $V$ associated to $P_0$ and $Q_0$, which intertwines $A_0$ with $-A_0$, it follows that $A_0$ is of the form
$$
A_0=\sum_{n\ge 1} \lambda_n(E_n-F_n),
$$
where $E_n, F_n$ $(n\ge 1$) are pairwise orthogonal projections with $\dim R(E_n)=\dim R(F_n)=m_n\le \infty$. The eigenvalues $\lambda_n$ of $A_0$ are different from $\pm 1$, because $N(A_0\pm 1)=\{0\}$.  Fix an orthonormal basis $\{\nu^n_k: 1\le k\le m_n\}$ for $R(E_n)$. The fact that $VA=-AV$ implies that $V$ maps (the $\lambda_n$-eigenspace) $R(E_n)$ onto  (the $-\lambda_n$-eigenspace) $R(F_n)$, and back. Then we can consider for $R(F_n)$ the orthonormal basis given by $\omega^n_k=V\nu^n_k$. Thus also $V\omega_k^n=\nu_k^n$. Then 
$$
P_0Q_0\nu_k^n=\frac12(1-\lambda_n^2)\nu_k^n +\frac12 (1-\lambda_n)(1-\lambda_n^2)^{1/2}\omega_k^n
$$
and
$$
P_0Q_0\omega_k^n=\frac12(1-\lambda_n^2)\omega_k^n +\frac12 (1+\lambda_n)(1-\lambda_n^2)^{1/2}\nu_k^n.
$$
It follows that the $2$-dimensional subspace generated by (the orthonormal vectors) $\nu_k^n$ and $\omega_k^n$ is  invariant for $P_0Q_0$. The matrix of $P_0Q_0$ restricted to this subspace (in this basis) is
$$
 \frac12\left( \begin{array}{cc} (1-\lambda_n^2) & (1+\lambda_n)(1-\lambda_n^2)^{1/2}
 \\ (1-\lambda_n)(1-\lambda_n^2)^{1/2} & (1-\lambda_n^2) \end{array} \right),
 $$
 whose singular values are $0$ and $(1-\lambda_n^2)^{1/2}$. In the orthonormal basis $\{\nu_k^n, \omega_k^n\}$ of $\h_0$   (paired in this fashion), the operator $P_0Q_0$ is block-diagonal, with $2\times 2$ blocks.  It follows that $PQ$ is S-decomposable with singular values $(1-\lambda_n^2)^{1/2}$ and, eventually, $1$.
 The singular value $1$ occurs only if $R(P)\cap R(Q)\ne \{0\}$.
\end{proof}
\begin{rem}
The multiplicity of $(1-\lambda^2_n)^{1/2}$ as a singular value of  $PQ$  is $m_n$.
\end{rem}

\begin{rem}
From the above results, which relate  eigenvalues of $P-Q$ and singular values of $PQ$, it follows that if $PQ$ is compact, and either $P$ or $Q$ have infinite rank,  then $\|P-Q\|=1$. Indeed, if $PQ$ is compact, the singular values accumulate eventually at $0$, and therefore the eigenvalues of $A$ accumulate at $1$. However, this result holds with more generality.  It is a simple exercise that if $p\ne q$ are non nil projections in a C$^*$-algebra such that $pq=0$, then $\|p-q\|=1$. Our case consists in reasoning in the Calkin algebra: $p=\pi(P)$, $q=\pi(Q)$, where $\pi:\b(\h)\to \b(\h)/\k(\h)$ is the quotient homomorphism. Then 
$$
1\le \|p-q\|\le \|P-Q\|\le 1.
$$
\end{rem}
The following result will be useful to provide further examples. In a special case (see Example \ref{ejemplo1} in Section 3), it was proven  by M. Smith (\cite{smith}, Th. 3.1) 

\begin{prop}\label{lechealgato}
$PQ$ is S-decomposable if and only if $P(1-Q)$ is S-decomposable (and therefore if and only if $(1-P)Q$ or $(1-P)(1-Q)$ are S-decomposable).
\end{prop}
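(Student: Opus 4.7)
The plan is to translate everything to diagonalizability of differences of projections and then prove a single equivalence. By Theorems \ref{A diagonal} and \ref{1.4}, $PQ$ is S-decomposable if and only if $A := P-Q$ is diagonalizable. Applying the same pair of theorems to the pairs $(P,1-Q)$, $(1-P,Q)$ and $(1-P,1-Q)$ one gets that $P(1-Q)$, $(1-P)Q$, $(1-P)(1-Q)$ are S-decomposable if and only if, respectively, $P-(1-Q) = P+Q-1 =: B$, $(1-P)-Q = -B$, and $(1-P)-(1-Q) = -A$ are diagonalizable. Since $X$ is diagonalizable if and only if $-X$ is, the proposition reduces to showing that $A$ is diagonalizable if and only if $B$ is.

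For the forward direction I would invoke the Davis decomposition recalled just before Theorem \ref{1.4}. The subspaces $N(A)$, $N(A-1)$, $N(A+1)$ reduce both $P$ and $Q$, hence they reduce $B$ as well; on them $A$ acts as $0,1,-1$ and $B$ acts as $\pm 1, 0, 0$, so both operators are trivially diagonalizable there. On the generic part $\h_0$, adding the Davis formulas for $P_0$ and $Q_0$ immediately gives
\[
B_0 = V(1-A_0^2)^{1/2},
\]
where the Davis symmetry $V$ is selfadjoint unitary and anticommutes with $A_0$, hence commutes with $(1-A_0^2)^{1/2}$. If $A_0$ is diagonalizable, write $A_0 = \sum_n \lambda_n(E_n-F_n)$ with $VE_nV = F_n$, exactly as in the proof of Theorem \ref{1.4}, pick an orthonormal basis $\{\nu_k^n\}$ of $R(E_n)$, and set $\omega_k^n = V\nu_k^n$. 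A short computation gives $B_0 \nu_k^n = (1-\lambda_n^2)^{1/2}\omega_k^n$ and $B_0 \omega_k^n = (1-\lambda_n^2)^{1/2}\nu_k^n$, so the orthonormal vectors $\tfrac{1}{\sqrt 2}(\nu_k^n \pm \omega_k^n)$ diagonalize $B_0$ with eigenvalues $\pm(1-\lambda_n^2)^{1/2}$. Hence $B$ is diagonalizable on all of $\h$.

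For the reverse direction I would apply the forward implication to the pair $(P, 1-Q)$ in place of $(P,Q)$: with this substitution the roles of $A$ and $B$ are interchanged, the new $A$ being $P-(1-Q) = B$ and the new $B$ being $P+(1-Q)-1 = A$. So diagonalizability of $B$ forces that of $A$ by the forward direction applied to the new pair, closing the loop. The only genuine obstacle is verifying the identity $B_0 = V(1-A_0^2)^{1/2}$ on the generic part; once that is in hand the rest is routine linear algebra on each two-dimensional invariant subspace $\mathrm{span}\{\nu_k^n,\omega_k^n\}$.
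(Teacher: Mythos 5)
Your proof is correct, but it takes a genuinely different route from the paper's. The paper's argument is a short, purely algebraic one: $P(1-Q)$ is S-decomposable iff $P(1-Q)(1-Q)P=P-PQP$ is diagonalizable; since this operator lives on $R(P)$, that happens iff $1-PQP=(1-P)\oplus(P-PQP)$ is diagonalizable on $\h=N(P)\oplus R(P)$, iff $PQP$ is diagonalizable, iff $PQ$ is S-decomposable — no appeal to Theorems \ref{A diagonal} and \ref{1.4} or to the Davis symmetry at all. You instead push everything through the equivalence ``$PQ$ S-decomposable $\Leftrightarrow$ $P-Q$ diagonalizable'' and then prove directly that $A=P-Q$ is diagonalizable iff $B=P+Q-1$ is, using the identity $B_0=V(1-A_0^2)^{1/2}$ on the generic part (which is correct: adding Davis' formulas for $P_0$ and $Q_0$ gives exactly this) together with the trivial action of $A$ and $B$ on $N(A)$, $N(A\mp1)$. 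The trade-off: the paper's proof is shorter and more elementary, while yours is heavier but buys more — it establishes, with explicit eigenvalue and multiplicity bookkeeping ($\pm\lambda_n \leftrightarrow \pm(1-\lambda_n^2)^{1/2}$), essentially the content of the corollary on $P\pm Q$ that the paper only derives afterwards from the proposition. Your reverse direction via the substitution $Q\mapsto 1-Q$, which swaps the roles of $A$ and $B$, is a clean way to close the loop, and there is no circularity since everything you invoke precedes the proposition in the paper.
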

\begin{proof}
$P(1-Q)$ is S-decomposable if and only if $P(1-Q)P=P-PQP$ is diagonalizable. This operator acts non trivially only in $R(P)$. Thus,  it is diagonalizable  if and only if it is diagonalizable in $R(P)$. Adding $1-P$ (equal to the identity in $N(P)$), one obtains that this latter fact is equivalent to 
$1-PQP=1-P \oplus P-PQP$ being diagonalizable in $\h=N(P)\oplus R(P)$. Clearly $1-PQP$ is diagonalizable if and only if $PQP$ also is, i.e., if and only if $PQ$  is S-decomposable.
\end{proof}
As a direct consequence of this fact, one obtains the following corollary
\begin{coro}
Let $P,Q$ be projections. Then $P-Q$ is diagonalizable if and only if $P+Q$ is diagonalizable.
In that case, $\lambda_n$ is an eigenvalue of $P-Q$ with $0<|\lambda_n|<1$  if and only if $1\pm (1-\lambda_n)^2$ is an eigenvalue of $P+Q$, with the same multiplicity. 
\end{coro}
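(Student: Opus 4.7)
The corollary chains together the main equivalences of this section. The plan is to observe first that $P+Q$ is diagonalizable if and only if $P+Q-1 = P-(1-Q)$ is, which by Theorems \ref{A diagonal} and \ref{1.4} (applied to the pair $(P,1-Q)$) is equivalent to the S-decomposability of $P(1-Q)$. By Proposition \ref{lechealgato} this is in turn equivalent to the S-decomposability of $PQ$, and finally, applying Theorems \ref{A diagonal} and \ref{1.4} again to the pair $(P,Q)$, this is equivalent to the diagonalizability of $P-Q$.

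For the eigenvalue correspondence, I track the parameter along the chain. By Theorem \ref{A diagonal}, a nonzero eigenvalue $\lambda_n$ of $P-Q$ with $|\lambda_n|<1$ corresponds to a singular value $s_n=(1-\lambda_n^2)^{1/2}$ of $PQ$ of the same multiplicity. Next, the identity $P(1-Q)(P(1-Q))^*= P-PQP$, which acts as $1-PQP$ on $R(P)$ and vanishes on $N(P)$, shows that the generic singular values of $P(1-Q)$ are $t_n=(1-s_n^2)^{1/2}=|\lambda_n|$ with matching multiplicities. Finally, applying Theorem \ref{A diagonal} to the pair $(P,1-Q)$ identifies the eigenvalues of $P+Q-1$ of modulus strictly less than one as $\pm(1-t_n^2)^{1/2}=\pm(1-\lambda_n^2)^{1/2}$, so that the corresponding eigenvalues of $P+Q$ come in pairs $1\pm(1-\lambda_n^2)^{1/2}$, each inheriting the multiplicity of $\lambda_n$.

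The only point that needs care --- rather than a deep obstacle --- is the bookkeeping for the exceptional subspaces $R(P)\cap R(Q)$, $R(P)\cap N(Q)$, $N(P)\cap R(Q)$, and $N(P)\cap N(Q)$, on which $P-Q$ takes values in $\{-1,0,1\}$, $PQ$ takes values in $\{0,1\}$, and $P+Q$ takes values in $\{0,1,2\}$. These contributions sit outside the generic range $0<|\lambda_n|<1$ and must be excluded from the correspondence; they only add trivial diagonal pieces at both ends of the chain. Once they are set aside, the multiplicity matching is automatic, because the symmetry $V$ of Theorem \ref{1.4} pairs the $+\lambda_n$ and $-\lambda_n$ eigenspaces of $P-Q$, and this symmetric splitting is preserved at every step of the equivalence.
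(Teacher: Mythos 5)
Your proof is correct and takes essentially the same route as the paper's: the equivalence is obtained by the same chain through Theorems \ref{A diagonal} and \ref{1.4} together with Proposition \ref{lechealgato} applied to the pair $(P,Q^{\perp})$, and the eigenvalues are tracked through the same intermediate singular values $s_n=(1-\lambda_n^2)^{1/2}$ of $PQ$ and $|\lambda_n|$ of $PQ^{\perp}$. Note that the formula you arrive at, $1\pm(1-\lambda_n^{2})^{1/2}$, is exactly what the paper's own proof derives; the expression $1\pm(1-\lambda_n)^{2}$ in the printed statement is a typo.
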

\begin{proof}
By the above results, any eigenvalue $\lambda_n=\pm(1-s_n^2)^{1/2}$, where $s_n$ is a singular value of $PQ$, or equivalently, $s_n^2$ is an eigenvalue of $PQP$. On the other hand, from the proof of Proposition \ref{lechealgato}, the eigenvalues of 
$$
1-PQP=1-P\oplus PQ^\perp P
$$
are $1$, and $1-s_n^2$. Then again by Theorem \ref{A diagonal}, the eigenvalues of  $P-Q^\perp=P+Q-1$ are $\pm s_n$, and thus the eigenvalues of $P+Q$ are $1\pm s_n=1\pm (1-\lambda_n^2)^{1/2}$. Since $P-Q$ is a difference of projections, the eigenvalues  $+\lambda$ and $-\lambda$ (when $0<|\lambda|<1$) have the same multiplicity (see \cite{pemenoscu}), and by the above results, these add up to the multiplicity of $s=(1-\lambda^2)^{1/2}$ as a singular value of $PQ$. This number clearly equals the multiplicity of $(1-s^2)^{1/2}$ as a singular value of $PQ^\perp$. Note that $P+Q-1=P-Q^\perp$ is also a difference of projections, therefore the multiplicities of $\pm s=\pm (1-\lambda^2)^{1/2}$ coincide ($0<s<1$).  
\end{proof}
\begin{rem}
The multiplicity of $1$ as an (eventual) eigenvalue of $P-Q$ is the dimension of $R(P)\cap N(Q)$, the multiplicity of $-1$ is the dimension of $N(P)\cap R(Q)$, the sum of these multiplicities is the multiplicity of $0$ in $P-Q^\perp$, or the multiplicity of $1$ in $P+Q$. Similarly, the multiplicity of $0$ in $P-Q$ equals the sum of the multiplicities of $0$ and $2$ in $P+Q$.
\end{rem}

\begin{rem}
To study the examples in the next section, it will also be useful to note that if $P$ has infinite rank and $PQ$ is compact, then $P(1-Q)$ is S-decomposable but non compact.
\end{rem}

\section{Examples}
 
\begin{ejem}\label{ejemplo1}

Let $I,J\subset \mathbb{R}^n$ be Lebesgue-measurable sets of finite measure. Let $P_I,Q_J$ be the projections in $L^2(\mathbb{R}^n,dx)$ given by
$$
P_If=\chi_If \ \ \hbox{ and } \ \ Q_Jf= \left(\chi_J \hat{f}\right)\check{\ },
$$
where $\chi_L$ denotes the characteristic function of the set $L$.
Equivalently, denoting by $U_\f$ the Fourier transform regarded as a unitary operator acting in $L^2(\mathbb{R}^n,dx)$, then
$$
P_I=M_{\chi_I} \ \hbox{ and } Q_J=U_\f^*M_{\chi_J}U_\f .
$$
In  \cite{donoho} (Lemma 2) it is proven that $P_IQ_J$ is a Hilbert-Schmidt operator. See also \cite{folland}. 
Then $T=P_IQ_J$ is S-decomposable (with square summable singular values)
These products play a relevant role in operator theoretic formulations of the uncertainty principle \cite{donoho}, \cite{folland}.

In this case one has the spectral picture of $A=P_I-Q_J$. It is known \cite{lenard}, \cite{folland} that 
$$
N(P_I)\cap N(Q_J)=R(P_I)\cap N(Q_J)=N(P_I)\cap R(Q_J)=\{0\}
$$
and $R(P_I)\cap R(Q_J)$ is infinite dimensional. Thus $N(A)=R(P_I)\cap R(Q_J)$ is infinite dimensional, $N(A\pm 1)=\{0\}$, and the eigenvalues of $A$ are of the form $\pm (1-s_k^2)^{1/2}$, where the sequence $s_k$ belongs to $\ell^2(\mathbb{Z})$. In special cases, e.g. $I=[0,T], J=[-\Omega,\Omega]$ intervals in $\mathbb{R}$, the eigenfunctions are known and the eigenvalues have multiplicity one \cite{bell labs}.

If one relaxes the condition that the sets be of finite measure, $P_IQ_J$ ceases to be compact.
Using Proposition \ref{lechealgato}, one obtains non compact examples: replacing the above conditions by $|\mathbb{R}^n\setminus I|<\infty$ or 
 $|\mathbb{R}^n\setminus J|<\infty$ (see also \cite{smith}, one obtains non-compact, S-decomposable products of projections.

Note also that, due to Theorem \ref{biorto}, in the above cases (i.e. both $I$ and $J$ have finite or co-finite measure), the subspaces 
$R(P_I)=\{f\in L^2(\mathbb{R}^n): f|_{\mathbb{R}^n\setminus I}=0\}$ and $R(Q_J)=\{g\in L^2(\mathbb{R}^n): \hat{g}|_{\mathbb{R}^n\setminus J}=0\}$ have orthonormal bases $\{f_n\}$ and $\{g_n\}$, repectively, which satisfy $\langle f_n,g_m\rangle=0$ if $n\ne m$.

We study more carefully the case
$$
I=[0,+\infty) , \  \ J=[-1,1],
$$
not covered above.
Straightforward computations (see \cite{lenard}) show that the operator $P_IQ_J$, acting in $L^2(0,+\infty)$ is given by
$$
P_IQ_JP_If(x)=\frac{1}{\pi}\int_0^\infty \sinc(x-t)f(t) dt.
$$
Let us prove that $P_IQ_JP_I$ is non compact. For $n\in\mathbb{N}$, let 
$$
e_n(x)=\left\{ \begin{array}{l} e^{-\frac{1}{n} x} e^{ix} , \hbox{ if } x\ge 0 \\ 0 \ \hbox{ otherwise. } \end{array} \right.
$$
Apparently $e_n\in L^2(\mathbb{R})$ and $\|e_n\|_2^2=\frac{n}2$.  Note that
$$
P_IQ_JP_I\frac{e_n(x)}{\|e_n\|_2}=\frac{\sqrt{2}}{\pi\sqrt{n}}\{\int_0^x sinc(x-t)e_n(t)dt + \int_x^\infty sinc(x-t)e_n(t)dt\}.
$$
Changing variables $v=x-t$ in the first integral and $u=t-x$ in the second, one obtains
$$
\frac{\sqrt{2}}{\pi\sqrt{n}}\{e_n(x) \int_0^x sinc(v)e_n(-v)dv + e_n(x) \int_0^\infty sinc(u)e_n(u) du\}.
$$
The  second  integral, which we shall denote $\lambda_n$, can be computed. Denote by $\mathbb{L}$ the usual Laplace transform.
Then
$$
\lambda_n=\int_0^\infty sinc(u)e_n(u) du=\int_0^\infty sinc(u)cos(u)e^{-\frac1{n}u} du+ i  \int_0^\infty sinc(u)\sin(u)e^{-\frac1{n}u} du
$$
$$
=\mathbb{L}(\frac{\sin(t)}{t}\cos(t))|_{t=\frac1{n}}+i\  \mathbb{L}(\frac{\sin^2(t)}{t})|_{t=\frac1{n}}=\frac\pi{2} - \arctan(\frac1{n}) + i\ \frac14 ln(1+4n^2).
$$
Let us denote by $F_n(x)$ the left hand integral,
$$
F_n(x)=\int_o^x sinc(t)e_n(-t)dt.
$$
\begin{lem}
With the current notations, 
$$
\frac1{\|e_n\|_2}{\|P_IQ_JP_Ie_n-e_n\|_2}  \to 0, \hbox{ as } \ \ n\to\infty.
$$
\end{lem}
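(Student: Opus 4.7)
The strategy is to reduce the statement to a scalar integral estimate on the half-line and then apply dominated convergence. The calculation immediately preceding the statement rewrites $P_IQ_JP_Ie_n$ as a pointwise multiplier:
$P_IQ_JP_I e_n(x) = \pi^{-1}(F_n(x)+\lambda_n)\,e_n(x)$. Consequently $P_IQ_JP_Ie_n - e_n = \pi^{-1}(F_n+\lambda_n - \pi)\,e_n$, and using $|e_n(x)|^2 = e^{-2x/n}$, $\|e_n\|_2^2 = n/2$, together with the rescaling $y = 2x/n$, the ratio to be estimated becomes
$$\frac{\|P_IQ_JP_Ie_n - e_n\|_2^2}{\|e_n\|_2^2} = \frac{1}{\pi^2}\int_0^\infty e^{-y}\bigl|F_n(ny/2)+\lambda_n-\pi\bigr|^2\,dy.$$
Hence it suffices to prove that this weighted integral tends to $0$ as $n\to\infty$.

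The plan is pointwise convergence of the integrand combined with a uniform integrable majorant. The Laplace-transform evaluation recalled just above yields $\lambda_n \to \pi/2$. For $F_n(ny/2)$ I would split
$$F_n(ny/2)=\int_0^{ny/2}\sinc(v)\,e^{-iv}\,dv+\int_0^{ny/2}\sinc(v)(e^{v/n}-1)\,e^{-iv}\,dv.$$
The first piece converges to the classical improper integral $\int_0^\infty \sinc(v)\,e^{-iv}\,dv=\pi/2$; the second is $o(1)$ for each $y$ because $|e^{v/n}-1|\le v\,e^{y/2}/n$ on $[0,ny/2]$ together with one integration by parts against $e^{-iv}$ to absorb the weak integrability at the upper endpoint. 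This gives $F_n(ny/2)+\lambda_n\to\pi$ for every fixed $y>0$. An analogous integration by parts in $F_n(x)$, treating $e^{-iv}$ as the oscillatory factor, produces boundary terms of size $O(1/x)$ plus an absolutely integrable remainder, yielding a uniform bound $|F_n(x)|+|\lambda_n|\le C$ for all $x\ge 0$ and $n\in\mathbb{N}$. The integrand is then dominated by $\pi^{-2}(2C+\pi)^2 e^{-y}$, which is integrable on $(0,\infty)$, and dominated convergence finishes the argument.

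The principal technical obstacle is precisely the uniform bound on $F_n$: the pointwise modulus $|\sinc(v)|\,e^{v/n}$ is not integrable on $[0,\infty)$, so naive $L^1$ estimates diverge and one must exploit the cancellation provided by $e^{-iv}$ through integration by parts in order to control $F_n$ uniformly in both $x$ and $n$. Once this uniform control is in place, the rest of the proof is a routine application of the dominated convergence theorem.
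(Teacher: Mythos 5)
Your reduction of the statement to the weighted integral $\frac{1}{\pi^2}\int_0^\infty e^{-y}\,\bigl|F_n(ny/2)+\lambda_n-\pi\bigr|^2\,dy$ is correct, but the two analytic inputs you then feed into dominated convergence both fail. First, $\lambda_n$ does not converge to $\pi/2$: by the Laplace transform evaluation in the text, $\mathrm{Im}(\lambda_n)=\frac14\ln(1+4n^2)\to\infty$, so $|\lambda_n|\to\infty$. Second, the ``classical improper integral'' $\int_0^\infty \sinc(v)e^{-iv}\,dv$ does not converge: its imaginary part is $-\int_0^\infty\frac{\sin^2v}{v}\,dv$, and $\frac{\sin^2v}{v}=\frac{1-\cos 2v}{2v}$ contains the non-oscillatory, non-integrable term $\frac{1}{2v}$, which no integration by parts against $e^{-iv}$ can remove. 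For the same reason the uniform bound $|F_n(x)|+|\lambda_n|\le C$ is unavailable: $\mathrm{Im}\,F_n(x)$ grows like $-\frac12\ln x$ for $1\ll x\lesssim n$. The divergent imaginary parts of $F_n(ny/2)$ and of $\lambda_n$ do cancel in the sum for each fixed $y>0$, but since neither term is individually bounded, the proposed majorant does not exist; one would have to estimate the combination $F_n(ny/2)+\lambda_n$ directly, uniformly in $n$ and with control in $y$ near $0$ and near $\infty$ --- which is exactly the ``principal technical obstacle'' your last paragraph claims to have dispatched by integration by parts, and which integration by parts cannot dispatch.

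The paper's proof is structured precisely so as to avoid any pointwise control of $F_n$. It computes the single number $\langle P_IQ_JP_Ie_n,e_n\rangle$ exactly: an integration by parts (using $F_n(0)=0$ and $F_n(x)/e^{2x/n}\to 0$) converts $\langle F_ne_n,e_n\rangle=\int_0^\infty e^{-2x/n}F_n(x)\,dx$ into $\frac{n}{2}\overline{\lambda_n}$, so that $\langle P_IQ_JP_Ie_n,e_n\rangle=\frac{n}{\pi}\,\mathrm{Re}(\lambda_n)$ and the divergent imaginary parts cancel identically; the norm of the difference is then bounded by expanding $\|P_IQ_JP_Ie_n-e_n\|_2^2$ and invoking only the contraction estimate $\|P_IQ_JP_Ie_n\|_2\le\|e_n\|_2$. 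If you wish to pursue your route, you must mimic this cancellation inside the integrand before seeking a dominating function. A final caution that affects both arguments: the real parts are also delicate, since $\int_0^\infty\frac{\sin u\cos u}{u}e^{-su}\,du=\frac12\arctan(2/s)$ (the factor $\cos u$ matters), so one should check whether $\mathrm{Re}(\lambda_n)$ and $\mathrm{Re}\bigl(F_n(ny/2)\bigr)$ tend to $\pi/2$ or to $\pi/4$; in the latter case the pointwise limit of your integrand is not $0$, so the dominated convergence strategy could not yield the stated conclusion even after the imaginary parts are repaired.
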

\begin{proof}

Compute
$$
\langle e_n F_n,e_n\rangle=\int_0^\infty e_n(x)F_n(x)\bar{e}_n(x) dx=\int_0^\infty e^{-\frac2{n}x}F_n(x) dx.
$$
Integrating by parts, and using that (by means of the L'Hospital rule !), we get
$$
\lim_{x\to +\infty} \frac{F_n(x)}{e^{\frac{2}{n}x}}=\lim_{x\to\infty} \frac{n}2 \frac{sinc(x)e_n(-x)}{e^{\frac{2}{n}x}}=\frac{n}2 \lim_{x\to +\infty} \frac{sinc(x)}{e^{\frac1{n}x}}=0,
$$
and that $F_n(0)=0$. Then
$$
\langle e_n F_n,e_n\rangle=\frac{n}2 \int_0^\infty e^{-\frac2{n}x} F'_n(x) dx=\frac{n}2 \int_0^\infty  e^{-\frac2{n}x} sinc(x) e^{\frac1{n}x}e^{-ix}dx
$$
$$
=\frac{n}2 \int_0^\infty  e^{-\frac1{n}x} sinc(x)\cos(x) dx -i \frac{n}2 \int_0^\infty  e^{-\frac1{n}x} sinc(x)\sin(x) dx,
$$
which, by computations similar as above involving the Laplace transform, equals
$$
\frac{n}2\{\frac\pi{2}- \arctan(\frac1{n}) -i\frac14 ln(1+4n^2)\}=\frac{n}2\bar{\lambda}_n.
$$
Then 
$$
\langle P_IQ_JP_I e_{n},e_{n}\rangle=\frac1{\pi}\langle e_{n}(F_{n}+\lambda_{n}),e_{n}\rangle=\frac1{\pi}\{ \lambda_n\|e_n\|_2^2+\langle F_ne_n,e_n\rangle\}
$$
$$
=\frac1{\pi}\{\frac{n}2\lambda_n+\frac{n}2\bar{\lambda}_n\}=\frac{n}{\pi} Re(\lambda_n)=\frac{n}{\pi}\{\frac\pi{2}-\arctan(\frac1{n})\}.
$$
Then
$$
\|P_IQ_JP_Ie_n-e_n\|_2^2=\|P_IQ_JP_Ie_n\|_2^2+\|e_n\|_2^2-2 Re \langle P_IQ_JP_Ie_n,e_n\rangle
$$
$$
\le 2\|e_n\|_2^2-2 \frac{n}{\pi}\{\frac\pi{2}-\arctan(\frac1{n})\}=2\frac{n}{\pi} \arctan(\frac1{n}).
$$
Therefore
$$
\frac1{\|e_n\|_2^2}{\|P_IQ_JP_Ie_n-e_n\|_2^2}\le \frac4{\pi} \arctan(\frac1{n})\to 0.
$$
\end{proof}
\begin{prop}
If $I=[0,+\infty)$ and $J=[-1,1]$, then $P_IQ_JP_I$ is non compact, with 
$$
\|P_IQ_JP_I\|=\|P_IQ_J\|=1 \ \hbox{ and  } \ \|P_I-Q_J\|=1.
$$
\end{prop}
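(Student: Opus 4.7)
The plan is to read off all three conclusions from the preceding lemma, supplemented by one extra high-frequency test-function construction needed only for the norm of $P_I - Q_J$. Write $f_n := e_n/\|e_n\|_2$. Using $\|e_n\|_2^2 = n/2$ together with the computation $\langle P_IQ_JP_I e_n, e_n\rangle = (n/\pi)(\pi/2 - \arctan(1/n))$ carried out in the body of the lemma, we get $\langle P_IQ_JP_I f_n, f_n\rangle \to 1$. Combined with the trivial upper bound $\|P_IQ_JP_I\| \le \|P_I\|^2\|Q_J\| = 1$, this gives $\|P_IQ_JP_I\| = 1$. The $C^\ast$-identity
$$
\|P_IQ_J\|^2 = \|(P_IQ_J)(P_IQ_J)^\ast\| = \|P_IQ_JP_I\|
$$
then yields $\|P_IQ_J\| = 1$.

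For non-compactness I would argue by contradiction. The lemma supplies $\|P_IQ_JP_I f_n - f_n\|_2 \to 0$. If $P_IQ_JP_I$ were compact, the bounded sequence $\{P_IQ_JP_If_n\}$ would have a norm-convergent subsequence, and the vanishing error would then force $\{f_n\}$ itself to have a norm-convergent subsequence. But $f_n(x) = \sqrt{2/n}\,e^{-x/n}e^{ix}$ tends pointwise to $0$ on $[0,+\infty)$, so any $L^2$-convergent subsequence would have an a.e.-convergent sub-subsequence whose limit is $0$ almost everywhere, contradicting $\|f_n\|_2 = 1$.

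For $\|P_I - Q_J\| = 1$, the upper bound is automatic, since $P_I - Q_J$ is self-adjoint and $|\langle (P_I-Q_J)x,x\rangle| = \bigl|\|P_Ix\|^2 - \|Q_Jx\|^2\bigr| \le \|x\|^2$. The sequence $f_n$ does \emph{not} help for the matching lower bound: one checks $\|Q_Jf_n\|_2^2 = \langle P_IQ_JP_If_n, f_n\rangle \to 1$, which in fact forces $(P_I - Q_J)f_n = f_n - Q_Jf_n \to 0$ in norm. The remedy is to pass to a different, high-frequency family concentrated in $I$ but with Fourier support almost disjoint from $J$. Take $g_N(t) := \chi_{[0,1]}(t)\,e^{iNt}$; this is a unit vector supported in $I$, so $P_Ig_N = g_N$ and hence $\|(P_I - Q_J)g_N\|_2^2 = 1 - \|Q_Jg_N\|_2^2$. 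By Plancherel, $\|Q_Jg_N\|_2^2$ is (up to the fixed normalization constant) equal to $\int_{-1}^1 |\widehat{g_N}(\omega)|^2\,d\omega$, and the elementary estimate $|\widehat{g_N}(\omega)| = O(1/(N-|\omega|))$ makes this integral tend to $0$ as $N \to \infty$. The principal obstacle is precisely this last step: the eigenvector-like sequence furnished by the lemma is \emph{orthogonal} to the direction witnessing $\|P_I - Q_J\| = 1$, so a genuinely different construction is required.
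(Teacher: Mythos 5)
Your argument is correct, and for most of the statement it runs parallel to the paper's: the lemma gives $\langle P_IQ_JP_I f_n,f_n\rangle=\tfrac{2}{\pi}\bigl(\tfrac{\pi}{2}-\arctan(\tfrac1n)\bigr)\to 1$, whence $\|P_IQ_JP_I\|=1$, and the $C^*$-identity gives $\|P_IQ_J\|=1$; for non-compactness the paper also argues by contradiction from the lemma, but rules out convergence of $\{f_n\}$ differently, by computing $\langle f_k,f_{k+1}\rangle=n_k^{1/2}n_{k+1}^{1/2}/(n_k+n_{k+1})\le\tfrac12$ (whereas a norm-convergent sequence of unit vectors must have consecutive inner products tending to $1$); your pointwise-a.e.\ argument is an equally valid substitute. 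The genuine divergence is in the last assertion. The paper disposes of $\|P_I-Q_J\|=1$ with the single sentence that it ``follows from the above lemma,'' and you are right to be suspicious: as you observe, the lemma's vectors satisfy $\|(P_I-Q_J)f_n\|^2=1-\|Q_Jf_n\|^2\to 0$, so they witness nothing, and $\|PQP\|=1$ alone never implies $\|P-Q\|=1$ (take $P=Q$). Your high-frequency family $g_N=\chi_{[0,1]}e^{iNt}$, with $\|(P_I-Q_J)g_N\|^2=1-\int_{-1}^1|\hat{g}_N(\omega)|^2\,d\omega\to 1$, supplies the missing lower bound cleanly, and your upper bound via $|\langle(P_I-Q_J)x,x\rangle|\le\|x\|^2$ is fine. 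An alternative that stays closer to the lemma: since $J=-J$, the reflection $x\mapsto -x$ conjugates $(1-P_I)Q_J(1-P_I)$ into $P_IQ_JP_I$, and from $(P-Q)^2=P(1-Q)P\oplus(1-P)Q(1-P)$ on $R(P)\oplus N(P)$ one gets $\|P_I-Q_J\|^2\ge\|(1-P_I)Q_J(1-P_I)\|=\|P_IQ_JP_I\|=1$. Either way, your write-up is more complete than the paper's on this last point.
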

\begin{proof}
If $P_IQ_JP_I$ were compact, there would exist a subsequence $f_k=\frac1{\|e_{n_k}\|_2}e_{n_k}$ such that $P_IQ_JP_If_k$ is convergent. By the above lemma, this would imply that the sequence $f_k$ is convergent. This is clearly not the case. For instance,
$$
\langle f_k,f_{k+1}\rangle=\frac1{\|e_{n_k}\|\|e_{n_{k+1}}\|}\langle e_{n_k},e_{n_{k+1}}\rangle=\frac2{n_k^{1/2}n_{k+1}^{1/2}} \int_0^\infty e^{-(\frac1{n_k}+\frac1{n_{k+1}})x} dx
=\frac{n_k^{1/2}n_{k+1}^{1/2}}{n_k+n_{k+1}}
$$
which is less than $\frac12$ 
by the geometric-arithmetic inequality. This clearly implies that  the sequence of the unit vectors $f_k$ cannot be convergent.

The last assertions follow from the above lemma.
\end{proof}

\begin{rem} Note that  Example \ref{ejemplo1} above shows, in particular, that the Volterra-like integral operator 
$$
Bf(x)=\int_0^x sinc(x-t)f(t) dt
$$
is unbounded in $L^2(0,+\infty)$ (though it is a Volterra operator on any finite interval $[0,r]$, thus compact with trivial spectrum in $L^2(0,r)$, for $r<\infty$). Indeed, if it were bounded, then
$T=P_IQ_JP_I-B$, 
$$
Tf(x)=\int_x^\infty sinc(x-t)f(t) dt
$$
would be bounded. But the computations above show that the functions $e_n(x)=e^{(-\frac1{n} +i)x}$ are eigenfunctions for $T$, with unbounded eigenvalues $\lambda_n$.
\end{rem}

\end{ejem}
\begin{ejem}\label{ejemplo2}
Let $\h=L^2(\mathbb{T},dt)$  
where $\mathbb{T}$ is the $1$-torus, and consider the decomposition 
$$
\h=\h_-\oplus\h_+,
$$
where $\h_+$ is the Hardy space. Let  $\varphi, \psi$ be continuous  functions in $\mathbb{T}$ with $|\varphi(e^{it})|=|\psi(e^{it})|=1$ for all $t$, and
$$
P=P_{\varphi\h_+}^\perp=1-P_{\varphi\h_+} \ , \ \ Q=P_{\psi\h_+}.
$$
Since $\varphi$ and $\psi$ are unimodular, the multiplication operators $M_\varphi$, $M_\psi$ are unitary in $\h$ and thus
$$
PQ=M_\varphi P_- M_{\bar{\varphi}\psi}P_+ M_{\bar{\psi}}.
$$
Note that $P_- M_{\bar{\varphi}\psi}|_{\h_+}=H(\bar{\varphi}\psi)$ is   the Hankel operator with symbol $\bar{\varphi}\psi$,  which is compact by Hartman's theorem \cite{hartman}  (see also Theorem 5.5 in  \cite{peller}). Thus $T=PQ$ is compact, and therefore S-decomposable. 

Again using Proposition \ref{lechealgato}, one obtains non compact S-decomposable examples. For instance, put now
$$
P=P_{\varphi \h_+} , \ \ Q=P_{\psi \h_+}.
$$
In this case
$$
PQ=M_\varphi P_+ M_{\bar{\varphi}\psi}P_+ M_{\bar{\psi}}
$$
is decomposable
Thus the  operator $P_+M_{\bar{\varphi}\psi}P_+$ is non-compact and S-decomposable in $L^2(\mathbb{T})$. Since it acts non trivially in $\h_+$, it follows that the Toeplitz operator $T_{\bar{\varphi}\psi}$ is S-decomposable in $\h_+$.

On the other hand, using Theorem \ref{A diagonal}, it follows that 
$$
A=P_{\varphi\h_+}-P_{\psi\h_+}
$$
diagonalizable. In \cite{grassh2} it was shown that $\pm 1$ are  eigenvalues of $A$ only if the winding numbers of $\varphi$ and $\psi$ do not coincide. The other  eigenvalues of $A$  are $\pm(1-s_n^2)^{1/2}$, where $s_n$ are the singular values of $T_{\bar{\varphi}\psi}$, and $0$. Since this operator has closed range (being a Fredholm operator), the eigenvalues do not accumulate at $\pm 1$. The nullspace of $A$ is infinite dimensional, it contains the subspace $\varphi\psi\h_+$.

Again, using Theorem \ref{biorto}, one obtains that, with the above hypothesis on $\varphi$ and $\psi$, there exist orthonormal bases $\{f_n\}$ and $\{g_n\}$ of $\h_+$ such that $\langle\varphi f_n,\psi g_m\rangle=0$ if $n\ne m$.

In \cite{howland} (Theorem 5.2) J.S. Howland proved that if the function $f$ on $\mathbb{T}$ is $C^2$  on the complement of a finite set $\{z_1, \dots, z_n\}$ at which the lateral limits $f(z_i^\pm)$ and $f'(z_i^\pm)$ exist, and one defines the jump of $f$ at $z$ to be 
$$
j(z)=f(z^+)-f(z^-),
$$
then the absolutely continuous part of the Hankel operator $H(f)$ is unitarily equivalent to 
$$
\oplus_{i=1}^n M_{i,z}
$$
where $M_{i,z}$ denotes the operator of multiplication by the variable $z$ in  $L^2(-\frac12 j(z_i),\frac12 j(z_i))$. In particular, this implies that if $\bar{\varphi}\psi$ is piecewise $C^2$ with jumps as $f$ above, then $PQ$ can be decomposed as a finite direct sum of operators, some of which are multiplication by the variable in $L^2$ of an interval. Clearly these operators are not S-decomposable. Then $PQ$ is  not  S-decomposable.
\end{ejem}

\begin{ejem}\label{E}
Let $\h=\l\times\s$,  $B:\s\to \l$ a bounded operator, and  $E=E_B$  the idempotent operator given by the matrix
$$
E=\left( \begin{array}{cc}  1_\l & B \\ 0 & 0 \end{array} \right).
$$
Any idempotent in $\b(\h)$ can be expanded in this form. In \cite{clasesE} the reader can find a study of the properties of $E$ in terms of those of $B$.  
Consider $P=P_{R(E)}=P_\l$ and $Q=P_{N(E)}$ and $T=PQ$. Straightforward computations show that $R(E)=\l$ and that
$$
P_{N(E)}=(1-E)(1-E-E^*)^{-1}=\left( \begin{array}{cc}  BB^*(1+BB^*)^{-1} & -B(1+B^*B)^{-1} \\ -B^*(1+BB^*)^{-1} & (1+B^*B)^{-1} \end{array} \right). 
$$
Then  
$$
TT^*=PQP=\left( \begin{array}{cc}   BB^*(1+BB^*)^{-1} & 0 \\ 0 & 0 \end{array} \right).
$$
Apparently, $T$ is S-decomposable if and only if $BB^*(1+BB^*)^{-1}$ is diagonalizable, which is equivalent to $BB^*$ being diagonalizable, or $B$ S-decomposable. Note also that $T$ is compact if and only if $B$ is compact. 

If one applies Theorem \ref{biorto} to this example, one obtains that $B$ is S-decomposable if and only if there exist orthonormal bases $\{(0,v_n)\}$ of $\{0\}\times \l$ and $\{(w_n,Bw_n)\}$ of the graph of $B$, such that $\langle(0,v_n),(w_m,Bw_m)\rangle=\langle v_n,Bw_m\rangle=0$ if $n\ne m$. This fact can be proved straightforwardly. 
\end{ejem}

\section{Moore-Penrose pseudoinverses}

Penrose  \cite{penrose} and Greville \cite{greville} proved that , for $n\times n$ square matrices, the Moore-Penrose inverse of an idempotent matrix $E$ is a product of orthogonal projections. More precisely, it holds that
$$
E^\dagger=P_{N(E)^\perp}P_{R(E)}.
$$
Since for matrices $(A^\dagger)^\dagger=A$, Penrose-Greville theorem can be stated as follows: an $n\times n$  matrix $E$ is idempotent if and only if $E^\dagger$ is a product of two orthogonal projections. This result was extended to infinite dimensional Hilbert space operators in \cite{cmanterior} , provided that $PQ$ is supposed to have closed range. In the case that $R(PQ)$  is not closed, there is still a similar characterization, but one needs to define the Moore-Penrose inverse for certain unbounded operators. The reader is referred to \cite{cm}.
As in example \ref{E}, if $E$ is an idempotent operator, in terms of the decomposition $\h=R(E)\oplus R(E)^\perp$, one has
$$
E=\left( \begin{array}{cc} 1 & B \\ 0 & 0 \end{array} \right),
$$
where $B:R(E)^\perp\to R(E)$.

Combining the above facts and previous results we obtain the following:
\begin{coro}
Let $E\in\b(\h)$ be an idempotent operator. Then the following are equivalent:
\begin{enumerate}
\item
$E$ is S-decomposable
\item
$B$ is S-decomposable
\item
$P_{N(E)^\perp}P_{R(E)}$ is S-decomposable.
\item
$P_{N(E)}P_{R(E)}$ is S-decomposable.
\item
$P_{R(E)}-P_{N(E)}$ is diagonalizable.
\item
$P_{R(E)}+P_{N(E)}$ is diagonalizable.
\item
There exist orthonormal bases $\{\eta_n\}$ of $R(E)$ and $\{\nu_n\}$ of $N(E)$ such that $\langle\eta_n,\nu_m\rangle=0$ if $n\ne m$.
\end{enumerate}
\end{coro}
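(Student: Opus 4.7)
The plan is to set $P=P_{R(E)}$, $Q=P_{N(E)}$ and use the machinery already developed in Section~2 together with the computations in Example~\ref{E} to chain all seven conditions through the central equivalence ``$PQ$ is S-decomposable''.

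First I would establish (1)$\iff$(2) by a direct block-matrix computation. Using the decomposition $\h=R(E)\oplus R(E)^\perp$ and the form $E=\bigl(\begin{smallmatrix} 1 & B \\ 0 & 0\end{smallmatrix}\bigr)$, one gets $EE^*=\bigl(\begin{smallmatrix} 1+BB^* & 0 \\ 0 & 0\end{smallmatrix}\bigr)$, so $EE^*$ is diagonalizable iff $1+BB^*$ is, iff $BB^*$ is, iff $B$ is S-decomposable. Since $E$ is S-decomposable iff $EE^*$ is diagonalizable, this gives (1)$\iff$(2). Then Example~\ref{E} already provides (2)$\iff$(4), since it shows that $P_{R(E)}P_{N(E)}$ is S-decomposable precisely when $BB^*(1+BB^*)^{-1}$ is diagonalizable, which is equivalent to $B$ being S-decomposable.

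Next I would tie in the remaining three product/projection conditions. For (3)$\iff$(4), note that $(P_{N(E)^\perp}P_{R(E)})^*=P_{R(E)}P_{N(E)^\perp}$, so S-decomposability of (3) is equivalent to S-decomposability of $P_{R(E)}P_{N(E)^\perp}$; then Proposition~\ref{lechealgato} (applied with $P=P_{R(E)}$, $1-Q=P_{N(E)^\perp}$) gives equivalence with (4). For (4)$\iff$(5) I would quote Theorem~\ref{A diagonal} together with its converse Theorem~\ref{1.4}, applied to the pair $(P_{R(E)},P_{N(E)})$. Then (5)$\iff$(6) is exactly the corollary following Proposition~\ref{lechealgato}. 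Finally, (4)$\iff$(7) follows by direct application of Theorem~\ref{biorto} to the subspaces $\s=R(E)$ and $\t=N(E)$.

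The expected obstacle is very mild: essentially this corollary is a bookkeeping exercise assembling results already proved. The one point that requires slight care is (3), because when $R(E)$ is not closed the Moore-Penrose inverse $E^\dagger=P_{N(E)^\perp}P_{R(E)}$ of Penrose-Greville may be unbounded as an inverse, but the \emph{product of projections} $P_{N(E)^\perp}P_{R(E)}$ is always bounded, so S-decomposability is intrinsic to this product and the adjoint/Proposition~\ref{lechealgato} argument above goes through without any closed-range hypothesis. Thus the entire chain of equivalences is obtained without any additional technical input beyond Section~2.
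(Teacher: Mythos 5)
Your proposal is correct and follows exactly the route the paper intends: the corollary is stated without proof as a combination of Example \ref{E}, Theorem \ref{biorto}, Theorems \ref{A diagonal} and \ref{1.4}, Proposition \ref{lechealgato} and its corollary, which is precisely the chain you assemble (and your block computation of $EE^*$ for (1)$\iff$(2) is the routine verification the paper leaves implicit). Your cautionary remark about (3) is fine but unnecessary here, since $R(E)=N(1-E)$ is automatically closed for a bounded idempotent and, in any case, S-decomposability concerns the bounded product of projections itself.
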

 
 Some of these conditions were proven in \cite{clasesE}.

\begin{rem}
By a theorem by Buckholtz (\cite{buckholtz}, Theorem 1), since $\h$ is the direct sum of $R(E)$ and $N(E)$, it follows that $P_{R(E)}-P_{N(E)}$ is invertible for every idempotent $E$, which in turn implies that $P_{R(E)}+P_{N(E)}$ is invertible. In fact, for any $P,Q\in\p(\h)$, $P-Q$ is invertible if and only if $\|PQ\|<1$ and $\|(1-P)(1-Q)\|<1$, while $P+Q$ is invertible if and only if $\|(1-P)(1-Q)\|<1$. In geometric terms, $\|PQ\|$  is the cosine of the (Dixmier) angle between $R(P)$ and $(Q)$, and  $\|(1-P)(1-Q)\|$ is the  cosine of the angle between $N(P)$ and $N(Q)$. If $\h$ is the direct sum of $R(P)$ and $RQ)$, these angles coincide and are not zero.
\end{rem}
Finally, note that if   $T$ is S-decomposable with expansion $T=\sum_{n\ge 1} \s_n\langle\ \ \ , \xi_n\rangle\psi_n$, then
$$
T^\dagger=\sum_{n\ge 1} \frac{1}{s_n}\langle\ \ \ , \psi_n\rangle\xi_n.
$$
\section{Isometries}
Given a subspace $\s\subset\h$ with a given orthonormal basis ${\bf  B}_\s=\{\xi_n: b\ge 1\}$, an isometry is defined,
$$
X_{{\bf  B}_\s}:\ell^2\to \h, \ \  X_{{\bf  B}_\s}(\{x_n\})=\sum_{n\ge 1} x_n \xi_n,
$$
whose range is $\s$.  Observe that, by definition, the set of all S-decomposable operators in $\h$ can be described as
$$
\{XDY^*: X,Y \hbox{ isometries } \ell^2\to \h, \ D\in\b(\ell^2) \hbox{ diagonal with positive entries}\}.
$$
The condition of bi-orthogonality of Theorem \ref{biorto} can be written in terms of the corresponding isometries.
\begin{prop}
Let $\s,\t$ be closed subspaces of $\h$. Then $T=P_\s P_\t$ is S-decomposable if and only if there exist  isometries $X, Y:\ell^2\to \h$, with range $\s$ and $\t$, respectively, such that
$$
X^*Y\in\b(\ell^2)
$$
is a diagonal matrix.
\end{prop}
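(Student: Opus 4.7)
The plan is to reduce the proposition directly to Theorem \ref{biorto} by translating the statement about orthonormal bases into the language of isometries from $\ell^2$. The correspondence is the standard one: an isometry $X\colon \ell^2\to\h$ with $R(X)=\s$ is the same datum as an orthonormal basis of $\s$, namely $\psi_n:=X e_n$, where $\{e_n\}$ is the canonical basis of $\ell^2$. Conversely, any orthonormal basis $\{\psi_n\}$ of $\s$ defines such an isometry via $X(\sum x_n e_n)=\sum x_n\psi_n$. (Implicit here is that $\s$, $\t$ are separable of infinite dimension; for finite-dimensional subspaces one replaces $\ell^2$ by $\mathbb{C}^k$, which affects nothing in what follows.) Let $Y$ be the analogous isometry for an orthonormal basis $\{\xi_n\}$ of $\t$.

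The key computational step is to identify the matrix of $X^*Y\in\b(\ell^2)$ in the standard basis. Its $(m,n)$ entry is
$$
\langle X^*Y e_n, e_m\rangle_{\ell^2}=\langle Y e_n, X e_m\rangle_\h=\langle \xi_n,\psi_m\rangle.
$$
Hence $X^*Y$ is a diagonal matrix if and only if $\langle\xi_n,\psi_m\rangle=0$ for every $n\ne m$. This is precisely the biorthogonality condition appearing in Theorem \ref{biorto}.

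To finish, I invoke Theorem \ref{biorto} in both directions. If $P_\s P_\t$ is S-decomposable, the theorem supplies orthonormal bases $\{\psi_n\}$ of $\s$ and $\{\xi_n\}$ of $\t$ with the biorthogonality property; the associated isometries $X,Y$ then satisfy $X^*Y$ diagonal. Conversely, if isometries $X,Y$ with the stated ranges are given such that $X^*Y$ is diagonal, then the bases $\psi_n=Xe_n$, $\xi_n=Ye_n$ satisfy the hypothesis of Theorem \ref{biorto}, and $P_\s P_\t$ is S-decomposable. There is no serious obstacle in this proof: the only content beyond Theorem \ref{biorto} is the elementary matrix identification above, and the mild bookkeeping of the bijection between isometries and orthonormal bases.
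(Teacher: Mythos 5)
Your proof is correct and follows essentially the same route as the paper: identify isometries $\ell^2\to\h$ with orthonormal bases of their ranges, compute that the $(m,n)$ entry of $X^*Y$ is $\langle\xi_n,\psi_m\rangle$, and apply Theorem \ref{biorto} in both directions. The remark about the finite-dimensional case is a harmless addition the paper leaves implicit.
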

 \begin{proof}
Suppose that $T$ is decomposable, then by Theorem (\ref{biorto}), there exist orthonormal bases ${\bf  B}_\s=\{\xi_k: k\ge 1\}$ and ${\bf  B}_\t=\{\psi_n: n\ge 1\}$ of $\s$ and $\t$  such that $\langle\xi_n,\psi_k\rangle=0$ if $n\ne k$. Consider the  isometries
$$
X=X_{{\bf  B}_\s} \ \hbox{ and } \ \ Y=X_{{\bf  B}_\t}.
$$
Then
$$
X^*Y(\{x_n\})=\{\langle\psi_n,\xi_n\rangle x_n\},
$$
i.e. $X^*Y$ is a diagonal matrix whose entries are $\langle\psi_n,\xi_n\rangle$.

Conversely, suppose that $X,Y:\ell^2\to\h$ are  isometries with $R(X)=\s$ and $R(Y)=\t$, such that $X^*Y$ is a diagonal matrix. Denote by $\{e_n: n\ge 1\}$ the canonical basis of $\ell^2$. Then $\xi_n=X(e_n)$ and $\psi_k=Y(e_k)$ form orthonormal bases of $\s$ and $\t$. Moreover
$$
\langle\xi_n,\psi_k\rangle=\langle X(e_n),Y(e_k)\rangle=\langle e_n, X^*Y(e_k)\rangle=0 \hbox{ if } n\ne k.
$$
\end{proof}
 
\section{Davis' symmetry}

Let $P,Q$ be projections, and consider 
$$
\h'=\{R(P)\cap N(Q) \oplus N(P)\cap R(Q)\}^\perp.
$$
This subspace reduces $P$ and $Q$, denote by $P'=P|_{\h'}$ and $Q'=Q|_{\h'}$, as operators acting in $\h'$. Note that 
$$
N(P+Q-1)=N(P-(1-Q))=R(P)\cap N(Q) \oplus N(P)\cap R(Q),
$$
and thus $S'=P'+Q'-1$ is a selfadjoint operator with trivial kernel (and thus dense range) in $\h'$. Let 
$$
S'=V|S'|
$$
be the polar decomposition. It follows that $V$ is a selfadjoint unitary operator, i.e., a symmetry. The fact that
$$
S'P'=Q'P'=Q'S' \ \ (\hbox{also } S'Q'=P'Q'=P'S')
$$
implies that the symmetry $V$ intertwines $P'$ and $Q'$:
$$
VP'V=Q' , \  VQ'V=P'.
$$
Also one recovers $P'$ and $Q'$ in terms of $V$ and the difference $A=P'-Q'$, by means of the formulas of the previous section:
$$
P'=P_V , \ \ Q'=Q_V.
$$
These facts were proved by Chandler Davis in \cite{davis}. 
Then $T=PQ$, in the decomposition $\h=\h'^\perp\oplus \h'$ is given by
$$
T=0\oplus VQ'VQ'=0\oplus P'VP'V.
$$
The following result is a straightforward consequence of the results in the previous section:
\begin{prop}
 $T=PQ$ is S-decomposable if and only  $Q'VQ'$ is diagonalizable (equivalently: $P'VP'$ is diagonalizable). If $\{\xi_n\}$ is an orthonormal system of eigenvectors for $Q'VQ'$, then $\langle V\xi_n,\xi_k\rangle=0$ if $n\ne k$.
\end{prop}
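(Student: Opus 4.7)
The plan is to reduce S-decomposability of $T$ to the diagonalizability of the self-adjoint operator $A:=Q'VQ'$, using the identity $T|_{\h'}=VQ'VQ'$ noted just above the statement. Since $\h=\h'^\perp\oplus\h'$ reduces $T$ and $T|_{\h'^\perp}=0$, we have $T^*T=0\oplus Q'P'Q'$, so S-decomposability of $T$ is equivalent to diagonalizability of $Q'P'Q'$ on $\h'$. Substituting Davis' relation $P'=VQ'V$ and using $V^2=1$ together with $(Q')^2=Q'$ yields the key identity
$$
Q'P'Q'=Q'VQ'VQ'=(Q'VQ')^2,
$$
and the symmetric computation $TT^*=P'Q'P'=(P'VP')^2$ handles the other equivalence.

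The analytic content is the statement that, for a bounded self-adjoint operator $A$ on a Hilbert space, $A$ is diagonalizable if and only if $A^2$ is. The forward direction is immediate. For the converse, if $A^2$ has pure point spectrum, each eigenspace $E_{\mu^2}$ (with $\mu\ge 0$) is closed and reduces $A$, and $A|_{E_{\mu^2}}$ is a self-adjoint operator satisfying $(A|_{E_{\mu^2}})^2=\mu^2 I$. Its spectrum is therefore contained in the finite set $\{\pm\mu\}$, so by the spectral theorem it admits an orthonormal eigenbasis on $E_{\mu^2}$; reassembling the pieces gives an eigenbasis for $A$ on $\h'$. Applied to $A=Q'VQ'$ and $A=P'VP'$, this yields the two claimed equivalences.

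For the biorthogonality assertion, any eigenvector $\xi_n$ of $Q'VQ'$ with nonzero eigenvalue $\mu_n$ lies in $R(Q')$, since $\xi_n=\mu_n^{-1}Q'VQ'\xi_n$; for the zero eigenvalue we select the eigenvectors inside $R(Q')$ (completing if necessary with an orthonormal basis of $N(Q')$, whose vectors are trivially zero-eigenvectors). Arranging $\{\xi_n\}$ in this way inside $R(Q')$, so that $Q'\xi_n=\xi_n$, the self-adjointness of $Q'VQ'$ gives, for $n\ne k$,
$$
\langle V\xi_n,\xi_k\rangle = \langle V\xi_n,Q'\xi_k\rangle = \langle Q'V\xi_n,\xi_k\rangle = \langle Q'VQ'\xi_n,\xi_k\rangle = \mu_n\langle\xi_n,\xi_k\rangle = 0,
$$
as claimed. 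The main obstacle is the transfer of diagonalizability from $A^2$ to $A$ for self-adjoint $A$, which relies on applying the spectral theorem to each (possibly infinite-dimensional) eigenspace of $A^2$; the remaining steps are elementary algebra with $V^2=1$ and Davis' intertwiner $VQ'V=P'$.
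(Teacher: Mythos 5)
Your proof is correct, but it takes a genuinely different route from the paper's. The paper argues constructively in one direction: given a diagonalization $Q'VQ'=\sum_n\lambda_n\langle\,\cdot\,,\xi_n\rangle\xi_n$, it multiplies by $V$ to obtain $P'Q'=VQ'VQ'=\sum_n\lambda_n\langle\,\cdot\,,\xi_n\rangle V\xi_n$, which (since $V$ is unitary, $\{V\xi_n\}$ is again orthonormal) is a Schmidt decomposition up to signs; the biorthogonality $\langle V\xi_n,\xi_k\rangle=0$ is then read off via Theorem \ref{biorto}, and the converse is left as a consequence of the preceding section. You instead reduce everything to the single identity $Q'P'Q'=Q'VQ'VQ'=(Q'VQ')^2$ (using $P'=VQ'V$, $V^2=1$, $Q'^2=Q'$) together with the lemma that a bounded self-adjoint operator is diagonalizable if and only if its square is, proved by restricting to the eigenspaces of $A^2$, which reduce $A$ and on which $A$ has spectrum in $\{\pm\mu\}$. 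This buys you a clean proof of both implications at once --- in particular the ``only if'' direction, which the paper does not spell out and which, via the paper's route, requires some care with multiplicities (compare the remark following the proposition, where $V\psi_n=\pm\xi_n$ is only obtained for simple singular values). What you lose is the explicit exhibition of the Schmidt pair $(\xi_n,V\xi_n)$, which the paper gets for free. Your direct verification of the biorthogonality, $\langle V\xi_n,\xi_k\rangle=\langle Q'VQ'\xi_n,\xi_k\rangle=\mu_n\langle\xi_n,\xi_k\rangle=0$, is correct and appropriately flags the (harmless) issue that this requires the eigenvectors to be taken inside $R(Q')$, which is automatic for nonzero eigenvalues and is how the statement is meant to be read.
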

\begin{proof}
If $Q'VQ'=\sum_{n\ge 1} \lambda_n \langle\ \ \ , \xi_n\rangle\xi_n$, then 
$$
P'Q'=VQ'VQ'=\sum_{n\ge 1} \lambda_n \langle\ \ \ , \xi_n\rangle V\xi_n,
$$
and thus the orthonormal systems $\{\xi_n\}$ and $\{V\xi_n\}$ are bi-orthogonal.
\end{proof}

\begin{rem}
Suppose that 
$$
P'Q'=VQ'VQ'=\sum_{n\ge 1} s_n \langle\ \ \ , \xi_n\rangle\psi_n.
$$
Then 
$$
Q'VQ'=\sum_{n\ge 1} s_n \langle\ \ \ , \xi_n\rangle V\psi_n=\sum_{n\ge 1} s_n \langle\ \ \ , V\psi_n\rangle\xi_n.
$$
In particular, if all the  singular values have multiplicity $1$, then $V\psi_n=\pm\xi_n$.
\end{rem}

Davis' symmetry is related to the metric geometry of the set $\p(\h)$ of projections in $\h$ (also called Grassmannian manifold of $\h$).
If one measures the length of a continuous piecewise smooth curve $p(t)\in\p(\h)$, $t\in I$, by means of
$$
\ell(p)=\int_I \|\frac{d}{dt} p(t)\| dt, 
$$
it was shown (\cite{pr}, \cite{cpr}) that curves in $\p(\h)$ of the form
$$
P(t)=e^{itX}Pe^{-itX}
$$
for $X^*=X$ with $\|X\|\le\pi/2$, such that $X$ is $P$-codiagonal (i.e $PXP=P^\perp XP^\perp=0$) have minimal length along their paths for $|t|\le 1$. That is, any curve joining a pair of projections in this path cannot be shorter that the part of $P(t)$ which joins these projections. Given two projections $P,Q$, in  \cite{pemenoscu} it was shown that there exists  a unique $X$ ($X^*=X$, $\|X\|\le \pi/2$, $X$ is $P$-codiagonal) such that $e^{iX}Pe^{-iX}=Q$   if and only if 
$$
N(P+Q-1)=\{0\}.
$$
Let us denote $X=X_{P,Q}$ if such is the case. Also in \cite{pemenoscu} it was shown that $V$ and $X_{P,Q}$ are related by
\begin{equation}\label{ecuacionVX}
V=e^{iX_{P,Q}}(2P-1).
\end{equation}
Note that since  (always in the case $N(P+Q-1)=\{0\}$) $\|X_{P,Q}\|\le \pi/2$, $X_{P,Q}$ is obtained from $V$ by means of the usual $\log$ function:
$$
X_{P,Q}=-i\log(V(2P-1)).
$$

Define the geodesic distance $d(P,Q)$  in $\p(\h)$ as
$$
d(P,Q)=\inf\{\ell(p): p \hbox{ joins } P \hbox{ and } Q \hbox{ in } \p(\h)\},
$$ 
Porta and Recht proved in \cite{pr}  that
\begin{equation}\label{distancia geodesica}
d(P,Q)=\|X_{P,Q}\|.
\end{equation}
\begin{rem}
Formula (\ref{ecuacionVX}) has a geometric interpretation. The fact that $X_{P,Q}$ is $P$-codiagonal, is equivalent to saying that  $X_{P,Q}$ and $2P-1$ anti-commute, it follows that $e^{itX_{P,Q}}(2P-1)=(2P-1)e^{-itX_{P,Q}}$. Then, in particular
$V=e^{\frac{i}2 X_{P,Q}}(2P-1)  e^{-\frac{i}2 X_{P,Q}}$, or equivalently
$$
\frac12 (1+V)=e^{\frac{i}2 X_{P,Q}} P e^{-\frac{i}2 X_{P,Q}}.
$$
In other words, the projection $\frac12(1+V)$ (onto the eigenspace where the symmetry  $V$ acts as the identity) is the midpoint of the geodesic $P(t)$ joining $P$ and $Q$.
\end{rem}
From the above facts, the following is apparent:
\begin{coro}
Let $P,Q$ be projections and, as above, $P',Q'$ the respective reductions to $N(P+Q-1)^\perp$, and let  $V$ be Davis' symmetry induced by these.
Then 
$$
P'VP'=P'e^{X_{P',Q'}}P'\ \  \hbox{ and } \ \  Q'VQ'=Q'e^{-X_{P',Q'}}Q'.
$$
Thus $PQ$ is S-decomposable if and only if $P'e^{X_{P',Q'}}P'$ is diagonalizable.
\end{coro}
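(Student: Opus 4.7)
The plan is to reduce both identities to the formula $V=e^{iX_{P',Q'}}(2P'-1)$ established in equation~(\ref{ecuacionVX}) (applied inside $\h'=N(P+Q-1)^\perp$, which is where the symmetry $V$ lives), and then combine with the proposition preceding this corollary.

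For the first identity, the key observation is trivial algebra: $(2P'-1)P'=P'$, so multiplying the formula for $V$ on the right by $P'$ and on the left by $P'$ collapses $(2P'-1)$ and gives
$$
P'VP'=P'e^{iX_{P',Q'}}(2P'-1)P'=P'e^{iX_{P',Q'}}P'.
$$

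For the dual identity, I would argue that the same $V$ also plays the role of Davis' symmetry for the pair $(Q',P')$: indeed $V$ is a self-adjoint unitary with $VQ'V=P'$ and $VP'V=Q'$, the two defining properties being symmetric in $P'$ and $Q'$. By uniqueness of Davis' symmetry one has $V=e^{iX_{Q',P'}}(2Q'-1)$. The claim is that $X_{Q',P'}=-X_{P',Q'}$. The operator $-X_{P',Q'}$ is self-adjoint, has norm at most $\pi/2$, satisfies $e^{-iX_{P',Q'}}Q'e^{iX_{P',Q'}}=P'$ (invert the relation $e^{iX_{P',Q'}}P'e^{-iX_{P',Q'}}=Q'$), and it remains to verify that it is $Q'$-codiagonal. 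This is the only subtle step: since $X_{P',Q'}$ commutes with $e^{iX_{P',Q'}}$, conjugating the $P'$-codiagonality relation $P'X_{P',Q'}P'=0=(1-P')X_{P',Q'}(1-P')$ by $e^{iX_{P',Q'}}$ transforms $P'$ into $Q'$ and leaves $X_{P',Q'}$ fixed, giving $Q'X_{P',Q'}Q'=0=(1-Q')X_{P',Q'}(1-Q')$. Uniqueness then forces $X_{Q',P'}=-X_{P',Q'}$, so $V=e^{-iX_{P',Q'}}(2Q'-1)$, and the same collapsing trick yields $Q'VQ'=Q'e^{-iX_{P',Q'}}Q'$.

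The final equivalence follows directly from the preceding proposition, which asserts that $PQ$ is S-decomposable if and only if $P'VP'$ is diagonalizable (equivalently $Q'VQ'$): simply substitute the identity just proved to rephrase this as diagonalizability of $P'e^{iX_{P',Q'}}P'$. The only genuinely non-mechanical point in the entire argument is the verification that $X_{P',Q'}$ is automatically $Q'$-codiagonal; everything else is a direct manipulation of the formulas already available from the earlier sections.
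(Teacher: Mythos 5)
Your argument is correct, and its skeleton is the paper's: both proofs rest on formula (\ref{ecuacionVX}), the collapse $(2P'-1)P'=P'$, and the proposition identifying S-decomposability of $PQ$ with diagonalizability of $P'VP'$ (equivalently $Q'VQ'$). The first identity is obtained verbatim as in the paper. Where you diverge is the second identity: you derive $V=e^{-iX_{P',Q'}}(2Q'-1)$ by showing $X_{Q',P'}=-X_{P',Q'}$ via the uniqueness of the minimal exponent, which requires your verification that $X_{P',Q'}$ is automatically $Q'$-codiagonal (your conjugation-by-$e^{iX}$ argument for this is valid). The paper gets the same identity in one line without any uniqueness appeal: since $X_{P',Q'}$ is $P'$-codiagonal it anticommutes with $2P'-1$, so $V=e^{iX_{P',Q'}}(2P'-1)=(2P'-1)e^{-iX_{P',Q'}}$, and combining with $2Q'-1=e^{iX_{P',Q'}}(2P'-1)e^{-iX_{P',Q'}}$ and $V=V^*$ yields $V=e^{-iX_{P',Q'}}(2Q'-1)$, whence $VQ'=e^{-iX_{P',Q'}}Q'$. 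Your route costs a little more but has the side benefit of recording that the geodesic exponent for the reversed pair is $-X_{P',Q'}$. Two small points of hygiene: the two properties you cite as ``defining'' Davis' symmetry (self-adjoint unitary intertwining $P'$ and $Q'$) do not by themselves pin down $V$ (they also hold for $-V$); the symmetry of $V$ in $P'$ and $Q'$ is better justified directly from its definition as the unitary part of the polar decomposition of $P'+Q'-1$, which is manifestly unchanged under swapping. Also, your use of $e^{iX_{P',Q'}}$ rather than the $e^{X_{P',Q'}}$ printed in the statement is the correct reading, consistent with (\ref{ecuacionVX}).
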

\begin{proof}
Since $V=e^{iX_{P',Q'}}(2P'-1)$, then
$$
P'VP'=P'e^{iX_{P',Q'}}(2P'-1)P'=P'e^{iX_{P',Q'}}P'.
$$
Similarly, $V=P'e^{iX_{P',Q'}}=e^{-iX_{P',Q'}}Q'$, and so forth.
\end{proof}
\begin{rem}
Since  $Q'=e^{iX_{P',Q'}}P'e^{-iX_{P',Q'}}(2P'-1)P'$, it also follows that 
$$
P'e^{iX_{P',Q'}}P'=P'Q'e^{-iX_{P',Q'}}=e^{-iX_{P',Q'}}Q'P'
$$
and
$$
Q'e^{-iX_{P',Q'}}Q'=Q'P'e^{iX_{P',Q'}}= e^{iX_{P',Q'}}P'Q'.
$$
\end{rem}

\begin{rem}
If the matrix of $X_{P',Q'}$ in terms of $P'$ is given by
$$
X_{P',Q'}=\left( \begin{array}{cc} 0 & Z \\ Z^* & 0 \end{array} \right),
$$
then 
$$
P'VP'=P'e^{iX_{P',Q'}}P'=\left( \begin{array}{cc} \cos(|Z^*|) & 0 \\ 0 & \cos(|Z|) \end{array} \right).
$$
\end{rem}
From this last remark, it follows that
\begin{teo}
$PQ$ is S-decomposable if and only if $Z$ is S-decomposable, if and only if $X_{P',Q'}$ is diagonalizable.
\end{teo}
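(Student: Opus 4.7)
The plan is to chain the two equivalences via the preceding corollary and remark. By the corollary, $PQ$ is S-decomposable if and only if $P'e^{iX_{P',Q'}}P'$ is diagonalizable. Writing $X_{P',Q'}$ in block form as $\bigl(\begin{smallmatrix}0 & Z \\ Z^* & 0\end{smallmatrix}\bigr)$ with respect to $\h'=R(P')\oplus N(P')$, the preceding remark identifies the block-diagonal part of $e^{iX_{P',Q'}}$ with $\cos(|Z^*|)\oplus\cos(|Z|)$; in particular $P'e^{iX_{P',Q'}}P'$ restricted to $R(P')$ equals $\cos(|Z^*|)$. So diagonalizability of $P'e^{iX_{P',Q'}}P'$ reduces to diagonalizability of $\cos(|Z^*|)$.

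Since $\|X_{P',Q'}\|\le \pi/2$, the spectrum of $|Z^*|$ lies in $[0,\pi/2]$, and $\cos$ restricts to a continuous bijection of this interval onto $[0,1]$. Applying $\arccos$ via Borel functional calculus then yields that $\cos(|Z^*|)$ is diagonalizable if and only if $|Z^*|$ is, equivalently $ZZ^*$ is diagonalizable, equivalently $Z$ is S-decomposable. This settles the first equivalence $PQ$ S-decomposable $\Leftrightarrow$ $Z$ S-decomposable.

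For the second equivalence, a direct computation gives $X_{P',Q'}^{2}=\bigl(\begin{smallmatrix}ZZ^* & 0 \\ 0 & Z^*Z\end{smallmatrix}\bigr)$. For a self-adjoint operator $X$, diagonalizability of $X$ is equivalent to that of $X^{2}$: the forward direction is immediate, and conversely $X$ commutes with $X^{2}$ and so preserves each eigenspace of $X^{2}$; on an eigenspace for $\nu>0$ the restriction of $X$ is self-adjoint with square $\nu I$, hence splits orthogonally into the eigenspaces for $\pm\sqrt{\nu}$, while on $\ker X^{2}=\ker X$ it vanishes. Therefore $X_{P',Q'}$ is diagonalizable iff $ZZ^*\oplus Z^*Z$ is, iff both $ZZ^*$ and $Z^*Z$ are diagonalizable, iff $Z$ is S-decomposable.

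The only real subtlety is the ``self-adjoint $X$ diagonalizable iff $X^2$ diagonalizable'' lemma used in the second step; with this in hand the rest is direct spectral calculus applied to the block identities already recorded in the previous remark.
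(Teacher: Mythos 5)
Your proof is correct and follows essentially the same route as the paper: both reduce the first equivalence to the remark identifying $P'e^{iX_{P',Q'}}P'$ with $\cos(|Z^*|)$ on $R(P')$ (using that $\cos$ is injective on $[0,\pi/2]$), and both derive the second from the block identity $X_{P',Q'}^2=ZZ^*\oplus Z^*Z$. The only cosmetic difference is that where the paper exhibits the diagonalization of $X_{P',Q'}$ explicitly on the two-dimensional subspaces spanned by the singular vectors $v_n,w_n$ of $Z$, you instead invoke the general fact that a self-adjoint operator is diagonalizable iff its square is; both are valid.
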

\begin{proof}
$$
X_{P',Q'}^2=\left( \begin{array}{cc} ZZ^* & 0 \\ 0 & Z^*Z \end{array} \right),
$$
Thus $X_{P',Q'}$ is diagonalizable if and only if $Z$ is S-decomposable. Indeed, if $Z$ is S-decomposable, 
$$
Z=\sum_{n\ge 1} s_n \langle\ \ , v_n\rangle w_n , \ Z^*=\sum_{n\ge 1} s_n \langle \ \ , w_n\rangle v_n.
$$
Note that $\{v_n\}$ span $R(P')$ and $\{w_n\}$ span $R(P')^\perp$, therefore, they are pairwise orthogonal systems of vectors. Then 
$$
X_{P',Q'}v_n=s_nw_n \ \hbox{ and } \ \ X_{P',Q'}w_n=s_nv_n.
$$
For each fixed $n$, the two dimensional space generated by $v_n$ and $w_n$ reduces $X_{P',Q'}$. As in a previous argument, $X_{P',Q'}$ can be diagonalized in each of these spaces, providing a diagonalization of the whole operator $X_{P',Q'}$.  The converse statement is apparent.
\end{proof}

Finally, let us further exploit formula (\ref{ecuacionVX}). 

\begin{coro}
If $A'=P'-Q'$, then
\begin{equation}\label{ecuacionAX}
e^{iX_{P',Q'}}=VA'+(1-A'^2)^{1/2}.
\end{equation}
\end{coro}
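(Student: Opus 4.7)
The plan is to derive the identity by a direct computation from formula (\ref{ecuacionVX}) combined with Davis' explicit expression for $P'$ in terms of $A'$ and $V$. The key point is that everything needed is already in place: the symmetry $V$ satisfies $V^2 = 1$, and $P' = P_V = \frac{1}{2}\{1 + A' + V(1-A'^2)^{1/2}\}$.

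First I would rewrite (\ref{ecuacionVX}) by solving for the exponential. Since $P'$ is a projection, $2P' - 1$ is a selfadjoint unitary (symmetry), so $(2P'-1)^2 = 1$. Multiplying both sides of $V = e^{iX_{P',Q'}}(2P'-1)$ on the right by $2P'-1$ therefore yields
\begin{equation*}
e^{iX_{P',Q'}} = V(2P' - 1).
\end{equation*}

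Next I would substitute Davis' formula for $P'$. From $P' = \frac{1}{2}\{1 + A' + V(1-A'^2)^{1/2}\}$ one reads off directly
\begin{equation*}
2P' - 1 = A' + V(1-A'^2)^{1/2}.
\end{equation*}
Plugging this into the previous display and distributing $V$ gives
\begin{equation*}
e^{iX_{P',Q'}} = V A' + V \cdot V(1-A'^2)^{1/2} = V A' + (1-A'^2)^{1/2},
\end{equation*}
using only that $V^2 = 1$. This is exactly (\ref{ecuacionAX}).

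There is no real obstacle here; the statement is essentially a one-line consequence of (\ref{ecuacionVX}) and the Davis parametrization recalled just before Theorem \ref{1.4}. The only point worth noting is that one does not need the anticommutation $VA' = -A'V$ nor the commutation of $V$ with $(1-A'^2)^{1/2}$ for this particular identity — those relations would only become relevant if one tried to rearrange the right-hand side further (for instance to verify consistency with $e^{-iX_{P',Q'}} = -VA' + (1-A'^2)^{1/2}$ via $X_{P',Q'}$ being $P'$-codiagonal).
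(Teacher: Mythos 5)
Your proof is correct and is essentially identical to the paper's: both invert formula (\ref{ecuacionVX}) using $(2P'-1)^2=1$ to get $e^{iX_{P',Q'}}=V(2P'-1)$, substitute Davis' expression $2P'-1=A'+V(1-A'^2)^{1/2}$, and simplify with $V^2=1$. Your added remark that neither the anticommutation $VA'=-A'V$ nor the commutation of $V$ with $(1-A'^2)^{1/2}$ is needed here is accurate and a nice observation, but the argument itself matches the paper's one-line computation.
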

\begin{proof}
In $N(P+Q-1)^\perp$, $P'=P_V=\frac12\{1+A'+V(1-A'^2)^{1/2}\}$, thus
$$
e^{iX_{P',Q'}}=V(2P'-1)=V\{A'+V(1-A'^2)^{1/2}\}=VA'+(1-A'^2)^{1/2}.
$$
\end{proof}
In particular, if $PQ$ is S-decomposable, with singular values of simple multiplicity, one has the following
\begin{teo}
Let $PQ$ be S-decomposable, $P'Q'=\sum_{n\ge 1} s_n \langle\ \ \ , \xi_n\rangle\psi_n$, with $s_n$ of multiplicity $1$. Then $X_{P',Q'}$ is diagonalized as follows
$$
X_{P'Q'}=\sum_{n\ge 1} i \log( s_n + i (1-s_n^2)^{1/2}) \eta_n\otimes\eta_n + i \log  (s_n-i(1-s_n^2)^{1/2}))\zeta_n\otimes \zeta_n,
$$
where 
$$
\eta_n=\frac1{\sqrt{2}} \nu_n-\frac{i}{\sqrt{2}}\omega_n\  \hbox{ and } \ \ \zeta_n=\frac1{\sqrt{2}} \nu_n+\frac{i}{\sqrt{2}}\omega_n,
$$
and (as in the proof of  Theorem \ref{A diagonal}) 
$$
\nu_n=((1-s_n^2)^{1/2}-1)\xi_n +s_n\psi_n\ \hbox{ and } \ \ \omega_n=(-(1-s_n^2)^{1/2}-1)\xi_n+s_n\psi_n
$$
\end{teo}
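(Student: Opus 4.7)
Plan. My plan is to diagonalise $X_{P',Q'}$ one two-dimensional invariant subspace at a time, using formula~(\ref{ecuacionAX}) of the previous corollary and reducing everything to a single $2\times 2$ rotation matrix calculation.

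Fix $n$ and let $\h_n$ denote the linear span of $\{\xi_n,\psi_n\}$; equivalently $\h_n$ is the span of $\{\nu_n,\omega_n\}$. The identities $P'\psi_n = \psi_n$, $P'\xi_n = s_n\psi_n$, $Q'\xi_n = \xi_n$ and $Q'\psi_n = s_n\xi_n$ show that $P'$, $Q'$ and $A' = P'-Q'$ all preserve $\h_n$. I would next show the same for the Davis symmetry $V$: from $S' = P'+Q'-1$ and $|S'|^2 = 1-A'^2$ a direct calculation gives $S'\xi_n = s_n\psi_n$ and $|S'|^2\xi_n = s_n^2\xi_n$, hence $V\xi_n = \psi_n$ via the polar decomposition $V = S'|S'|^{-1}$; symmetrically $V\psi_n = \xi_n$. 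So $e^{iX_{P',Q'}} = VA' + (1-A'^2)^{1/2}$ also preserves $\h_n$.

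Next I would work in the orthonormal basis $\{\hat{\nu}_n,\hat{\omega}_n\}$ of $\h_n$ obtained by normalising $\nu_n,\omega_n$ (which are already orthogonal by Theorem~\ref{A diagonal}). In this basis $A'|_{\h_n}$ is diagonal with entries $\pm\lambda_n$, where $\lambda_n = (1-s_n^2)^{1/2}$. Translating the relations $V\xi_n = \psi_n$, $V\psi_n = \xi_n$ into this basis, and using $s_n^2 = 1-\lambda_n^2$ to collapse the ratios $\|\nu_n\|/\|\omega_n\|$, gives $V\hat{\nu}_n = -\hat{\omega}_n$ and $V\hat{\omega}_n = -\hat{\nu}_n$. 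Substituting into (\ref{ecuacionAX}), the matrix of $e^{iX_{P',Q'}}$ on $\h_n$ is the rotation
$$
\left(\begin{array}{cc} s_n & \lambda_n \\ -\lambda_n & s_n \end{array}\right)
 \;=\;
\left(\begin{array}{cc} \cos\theta_n & \sin\theta_n \\ -\sin\theta_n & \cos\theta_n \end{array}\right),
\qquad \theta_n := \arccos s_n \in (0,\pi/2).
$$

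Finally, I would diagonalise this $2\times 2$ rotation directly: its eigenvalues are $s_n \pm i\lambda_n = e^{\pm i\theta_n}$ with associated unit eigenvectors $\frac{1}{\sqrt 2}(\hat{\nu}_n \mp i\hat{\omega}_n)$, which (with the implicit normalisation of $\nu_n,\omega_n$) are precisely the vectors $\eta_n$ and $\zeta_n$ of the statement. Since $\|X_{P',Q'}\|\le\pi/2$ on $\h'$, the principal branch of $\log$ inverts the exponential and produces eigenvalues $i\log(s_n\pm i\lambda_n) = \mp\theta_n$ of $X_{P',Q'}$ on $\h_n$. Summing over $n$, and noting that $X_{P',Q'}$ vanishes on $N(A')\subset\h'$ (where $A' = 0$ and so $e^{iX_{P',Q'}} = 1$), gives the claimed spectral expansion. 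The subtlest point is verifying the sign in $V\hat{\nu}_n = -\hat{\omega}_n$ directly from the formulas defining $\nu_n,\omega_n$; after that the rest is a routine $2\times 2$ diagonalisation.
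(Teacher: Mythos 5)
Your argument is correct in substance and follows the same overall strategy as the paper's proof: reduce $e^{iX_{P',Q'}}=VA'+(1-A'^2)^{1/2}$ (formula (\ref{ecuacionAX})) to $2\times 2$ blocks on the spans of $\{\nu_n,\omega_n\}$ and diagonalize each block as a plane rotation. The genuine difference is how you determine the action of $V$. The paper argues abstractly: from $VA'V=-A'$ and the simple--multiplicity hypothesis it deduces $V(\nu_n\otimes\nu_n)V=\omega_n\otimes\omega_n$, so $V\nu_n$ is a unimodular multiple of $\omega_n$, and then asserts that the phase can be normalized to $+1$ by rescaling $\xi_n,\psi_n$. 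You instead compute $V$ outright from the polar decomposition of $S'=P'+Q'-1$: since $S'\xi_n=s_n\psi_n$, $S'\psi_n=s_n\xi_n$ and $|S'|$ acts as $s_n$ on the block, one gets $V\xi_n=\psi_n$, $V\psi_n=\xi_n$, which after normalizing $\nu_n,\omega_n$ forces $V\hat{\nu}_n=-\hat{\omega}_n$. This buys you something real: the paper's phase adjustment is not actually available (keeping $s_n=\langle\psi_n,\xi_n\rangle>0$ forces the same unimodular scalar on $\xi_n$ and $\psi_n$, which rescales $\nu_n$ and $\omega_n$ identically and cannot flip the relative sign), so your direct computation closes that gap, it does not use simple multiplicity at this step, and it produces the block $\left(\begin{array}{cc} s_n & \lambda_n \\ -\lambda_n & s_n \end{array}\right)$ with $\lambda_n=(1-s_n^2)^{1/2}$ --- the transpose of the block in the paper's proof --- which is precisely the one consistent with the signs $i\log(s_n\pm i\lambda_n)$ in the statement.

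One blemish in your final paragraph: for your matrix the eigenvector $\eta_n=\frac1{\sqrt2}(\hat{\nu}_n-i\hat{\omega}_n)$ belongs to the eigenvalue $s_n-i\lambda_n=e^{-i\theta_n}$, not to $s_n+i\lambda_n$; and the eigenvalue of $X_{P',Q'}$ on the $\mu$-eigenspace of $e^{iX_{P',Q'}}$ is $-i\log\mu$, not $i\log\mu$. These two sign slips cancel, so you do land on the correct relation $X_{P',Q'}\eta_n=-\theta_n\eta_n=i\log(s_n+i\lambda_n)\eta_n$, but the intermediate pairing should be stated correctly. You should also say a word about the degenerate case $s_n=1$ (where $\nu_n=\omega_n=0$ and the block collapses into $N(A')$, on which you already note $X_{P',Q'}=0$).
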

\begin{proof}
If $PQ$ is S-decomposable, considering the decomposition of $PQ|_{N(P+Q-1)^\perp}=P'Q'$, in the proof of Theorem \ref{A diagonal}, 
$$
A'=\sum_{n\ge 1} (1-s^2_n)\nu_n\otimes\nu_n- (1-s_n^2)^{1/2}\omega_n\otimes\omega_n,
$$
for  $\nu_n, \omega_n$ described above. Then
$$
(1-A'^2)^{1/2}=\sum_{n\ge 1} s_n\nu_n\otimes\nu_n+ s_n\omega_n\otimes\omega_n.
$$
Recall that $VA=-VA$, or equivalently, $VAV=-A$ (see remarks before Theorem \ref{1.4}). Note that in $N(P+Q-1)^\perp$ we have erased the eigenvalues $\pm1$ from $A$. Then,  using Theorem \ref{A diagonal}, the fact that  the singular values of $P'Q'$ have simple multiplicity  implies that the (non nil) eigenvalues of $A'$ have single multiplicity. These two assertions imply that 
$$
V\nu_n\otimes V\nu_n=V(\nu_n\otimes\nu_n)V=\omega_n\otimes\omega_n.
$$
Thus, in the diagonalization of $A'$,  we may   replace $\xi_n, \psi_n$ by scalar multiples (of modulus one) in order that 
$$
V\nu_n=\omega_n \ \hbox{ and } \ \ V\omega_n=\nu_n.
$$
Then
$$
VA'=\sum_{n\ge 1} (1-s_n^2)^{1/2} \omega_n\otimes\nu_n - (1-s_n^2)^{1/2}\nu_n\otimes\omega_n.
$$
Thus, by the formula in the above Corollary, 
$$
e^{iX_{P',Q'}}=VA'+(1-A'^2)^{1/2}=\sum_{n\ge 1} (1-s_n^2)^{1/2} \omega_n\otimes\nu_n - (1-s_n^2)^{1/2}\nu_n\otimes\omega_n+ s_n\nu_n\otimes\nu_n + s_n\omega_n\otimes\omega_n.
$$
Note that this is a block diagonal operator, with $2\times 2$ blocks, given by the subspaces generated by the (orthonormal) vectors $\nu_n$ and $\omega_n$  for each $n$. Each block, in this basis, is given by
$$
\left( \begin{array}{cc} s_n & -(1-s_n^2)^{1/2} \\ (1-s_n^2)^{1/2} & s_n \end{array} \right),
$$
whose eigenvalues are $s_n+i(1-s_n^2)^{1/2}$ and $s_n-i(1-s_n^2)^{1/2}$, with (orthonormal) eigenvectors
$$
\eta_n=\frac1{\sqrt{2}} \nu_n-\frac{i}{\sqrt{2}}\omega_n\  \hbox{ and } \ \ \zeta_n=\frac1{\sqrt{2}} \nu_n+\frac{i}{\sqrt{2}}\omega_n,
$$
respectively, and the proof follows.
\end{proof}

Note that since $0<s_n$,  the logarithms of these eigenvalues have modulus smaller than  $\pi/2$, a fact predicted by the condition $\|X_{P',Q'}\|\le\pi/2$.

\begin{ejems}
Let us review the examples in Section 3:
\noindent

\begin{enumerate}

\item
For $I,J\subset \mathbb{R}^n$ of finite Lebesgue measure, it is known (see \cite{lenard}, \cite{folland}) that 
$$
N(P_I+Q_J-1)=\{0\}.
$$
Thus $P_I'=P_I$ and $Q_J'=Q_J$. It  is also  known (see for instance \cite{bell labs})  that in the particular case when $I$ and $J$ are intervals, the singular values of of $P_IQ_J$ have multiplicity one. Moreover the functions $\psi_n$ and $\xi_n$ are known  to be the prolate spheroidal functions, for precise $I$ and $J$ (intervals in $\mathbb{R}$) \cite{bell labs}.
It follows that one can compute explicitely the eigenvectors of $X_{P_I,Q_J}$ for such intervals $I, J$.
\item
As in Example \ref{ejemplo2},  consider $\h=L^2(\mathbb{T})$ and
$$
P=P_{\varphi\h_+} \ , \ \  Q=P_{\psi\h_+},
$$
for $\varphi, \psi$ continuous functions in $\mathbb{T}$, of modulus $1$. It was shown in \cite{grassh2} that if $\varphi$ and $\psi$ have the same winding number, then 
$$
N(P+Q-1)=\varphi\h_+\cap (\psi\h_+)^\perp \oplus (\varphi\h_+)^\perp \cap \psi\h_+=\{0\}.
$$
\item
As in example \ref{E}, let $\h=\l\times\s$ and $B:\s\to \l$ a bounded operator, $P=P_{R(E)}=P_\l$ and $Q=P_{N(E)}$ and $T=PQ$. Elementary computations show that
$$
N(P+Q-1)=R(B)^\perp\times \{0\} \oplus \{0\}\times N(B).
$$
Thus this nullspace is trivial if and only if $B$ has trivial nullspace and dense range.  Suppose that this is the case. 
Also it is straightforward to verify that 
$$
P+Q-1=\left( \begin{array}{cc}  BB^*(1+BB^*)^{-1} & -B(1+B^*B)^{-1} \\ -B^*(1+BB^*)^{-1} & -B^*B(1+B^*B)^{-1} \end{array} \right). 
$$
and that
$$
(P+Q-1)^2= \left( \begin{array}{cc}  BB^*(1+BB^*)^{-1} & 0 \\ 0 & B^*B(1+B^*B)^{-1} \end{array} \right).
$$
Then 
$$
|P+Q-1|=\left( \begin{array}{cc}  (BB^*)^{1/2}(1+BB^*)^{-1/2} & 0 \\ 0 & (B^*B)^{1/2}(1+B^*B)^{-1/2} \end{array} \right).
$$
Thus
$$
V=(P+Q-1)|P+Q-1|^{-1}=\left( \begin{array}{cc} |B^*|(1+|B^*|^2)^{-1/2}   & -B|B|^{-1}(1+|B|^2)^{-1/2} \\ -B^*|B^*|^{-1}(1+|B^*|^2)^{-1/2} & -|B|(1+|B|^2)^{-1/2} \end{array} \right).
$$
This computation is apparent if $B$ (and thus $|P+Q-1|$) is invertible, but also makes sense when $B$ has trivial nullspace and dense range. If $B=W|B|=|B^*|W$ are the polar decompositions of $B$, one has
$$
V=\left( \begin{array}{cc} |B^*|(1+|B^*|^2)^{-1/2}   & -W(1+|B|^2)^{-1/2} \\ -W(1+|B^*|^2)^{-1/2} & -|B|(1+|B|^2)^{1/2} \end{array} \right)
$$
 where $W(1+|B^*|^2)^{-1/2}$ can be replaced by $(1+|B|^2)^{-1/2}W^*$.

Therefore 
$$
e^{iX_{P,Q}}= V(2P-1)=\left( \begin{array}{cc} |B^*|(1+|B^*|^2)^{-1/2}   & W(1+|B|^2)^{-1/2} \\ -W(1+|B^*|^2)^{-1/2} & |B|(1+|B|^2)^{1/2} \end{array} \right)
$$
Suppose now that $B$ is S-decomposable, $B=\sum_{n\ge 1} s_n \langle\ \ , e_n\rangle f_n$, where since $B$ has trivial nullspace and dense range, where $\{e_n\}$ and $\{f_n\}$ are orthonormal bases of $\s$ and $\l$, respectively. Then
$$
|B|=\sum_{n\ge 1} s_n e_n\otimes e_n  \  , \ \  |B^*|=\sum_{n\ge 1} s_n f_n\otimes f_n \ ,
$$
and $W$ is a unitary operators ($W:\s\to\l$), with $We_n=f_n$. Let
$\xi_n=(e_n,0)$, $\psi_n=(0,f_n)$. Then  $\{\xi_n,\psi_n\}$ span a reducing subspace of  $T=PQ$, $P=P_{R(E)}$, $Q=P_{N(E)}$, and in view of the above formulas, also reducing for $V$ and $X_{P,Q}$. Elementary computations show that the matrix of $e^{iX_{P,Q}}$  in the basis of this reducing subspace is 
$$
\frac{1}{(1+s_n^2)^{1/2}} \left( \begin{array}{cc} s_n & 1 \\ -1 & s_n \end{array} \right).
$$
Let $\theta_n$ be defined by $\cos(\theta_n)=\frac{s_n}{(1+s_n^2)^{1/2}}$ and $\sin(\theta_n)=\frac{1}{(1+s_n^2)^{1/2}}$ (or equivalently, since $s_n>0$: $\tan(\theta_n)=\frac1{s_n}$), then the matrix of $X_{P,Q}$ in this reducing subspace is 
$$
\left( \begin{array}{cc} 0 & -i\theta_n \\ i\theta_n & 0 \end{array} \right).
$$
Recall \cite{pemenoscu} that if $P$ and $Q$ are projections such that $N(P+Q-1)=\{0\}$, there exists a unique exponent $X_{P,Q}$ with 
$d(P,Q)=\|X_{P,Q}\|$.
In particular, one has the following consequence:
\begin{coro}\label{D1}
Let $B:\s\to \l$ with trivial nullspace and dense range, and $E$ as in Example \ref{E}.
\begin{enumerate}
\item
If $B$ is invertible, then the geodesic dictance between $P_{R(E)}$ and $P_{N(E)}$ is 
$$
d(P_{R(E)},P_{N(E)})=\arctan(\|B^{-1}\|)<\pi/2.
$$
\item
If $B$ is non invertible (i.e. $B^{-1}$ is unbounded), then
$$
 d(P_{R(E)},P_{N(E)})=\pi/2.
$$
\end{enumerate}
\end{coro}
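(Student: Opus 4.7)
The plan is to compute $d(P_{R(E)},P_{N(E)})$ via the Porta--Recht formula (\ref{distancia geodesica}), which gives $d(P,Q)=\|X_{P,Q}\|$. Since $B$ has trivial nullspace and dense range, $N(P+Q-1)=\{0\}$, so $P'=P$, $Q'=Q$, and $X_{P,Q}$ is well defined by (\ref{ecuacionVX}). The explicit matrix of $e^{iX_{P,Q}}=V(2P-1)$ in the decomposition $\h=\l\times\s=R(P)\oplus R(P)^\perp$ has been computed above within this example. Since $X_{P,Q}$ is $P$-codiagonal, it has the block form $X_{P,Q}=\left(\begin{array}{cc} 0 & Z \\ Z^* & 0 \end{array}\right)$; hence $\cos(X_{P,Q})$ is block-diagonal with blocks $\cos(|Z^*|)$ on $R(P)$ and $\cos(|Z|)$ on $R(P)^\perp$, while $i\sin(X_{P,Q})$ is anti-diagonal. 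Reading off the real part of $e^{iX_{P,Q}}$ therefore identifies $\cos(X_{P,Q})$.

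This yields
$$
\cos(|Z|)=|B|(1+|B|^2)^{-1/2},\qquad \cos(|Z^*|)=|B^*|(1+|B^*|^2)^{-1/2}.
$$
Define $h\colon [0,\infty)\to[0,\pi/2]$ by $h(t)=\arctan(1/t)$ for $t>0$ and $h(0)=\pi/2$; then $h$ is continuous and bounded, and $\cos(h(t))=t/(1+t^2)^{1/2}$ for all $t\ge 0$. Since $\|X_{P,Q}\|\le\pi/2$, the spectra of $|Z|$ and $|Z^*|$ lie in $[0,\pi/2]$ where $\cos$ is a bijection onto $[0,1]$, so continuous functional calculus gives $|Z|=h(|B|)$ and $|Z^*|=h(|B^*|)$. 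From the block form $X_{P,Q}^2=\left(\begin{array}{cc} ZZ^* & 0 \\ 0 & Z^*Z \end{array}\right)$ we obtain $\|X_{P,Q}\|^2=\max(\|ZZ^*\|,\|Z^*Z\|)=\|Z\|^2$, and hence
$$
d(P_{R(E)},P_{N(E)})=\|Z\|=\| |Z| \|=\sup\{h(t):t\in\sigma(|B|)\}.
$$

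To conclude, I would split into cases. When $B$ is invertible, $|B|$ is bounded below by $\|B^{-1}\|^{-1}$, so $\sigma(|B|)\subset[\|B^{-1}\|^{-1},\|B\|]$; since $h$ is strictly decreasing on $(0,\infty)$, the supremum equals $h(\|B^{-1}\|^{-1})=\arctan(\|B^{-1}\|)<\pi/2$. When $B$ is not invertible, it is not bounded below (otherwise closed range plus dense range would yield invertibility by the open mapping theorem), so $0\in\sigma(|B|)$ and the supremum equals $h(0)=\pi/2$. The main obstacle is the functional-calculus identification $|Z|=h(|B|)$ in the non-invertible case: one must use the continuous extension $h(0)=\pi/2$, justified by the a priori bound $\|X_{P,Q}\|\le\pi/2$ that pins down the branch of $\arccos$. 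The explicit $2\times 2$ block computation already carried out in this example for S-decomposable $B$, producing eigenvalues $\theta_n=\arctan(1/s_n)$, furnishes a reassuring sanity check, as $\sup_n\theta_n$ reduces to the same answer.
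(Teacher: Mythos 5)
Your proof is correct, but it follows a genuinely different route from the paper's. The paper first treats the case where $B$ is S-decomposable, using the explicit $2\times 2$ reducing blocks already computed in the example (which give the eigenvalues $\pm\theta_n$ of $X_{P,Q}$ with $\tan(\theta_n)=1/s_n$) and taking the supremum over $n$; it then handles arbitrary $B$ by approximating $|B|$ by positive invertible operators $A_k$ with finite spectrum, setting $B_k=WA_k$, and invoking continuity of $B\mapsto (P_{R(E)},P_{N(E)})$ together with $d(P_k,Q_k)\to d(P,Q)$ to pass to the limit. You instead argue directly: you read off $\cos(X_{P,Q})$ as the block-diagonal part of the already-computed $e^{iX_{P,Q}}$ (legitimate, since $X_{P,Q}$ is selfadjoint and $P$-codiagonal, so $\cos(X_{P,Q})$ is block-diagonal and $i\sin(X_{P,Q})$ is block-antidiagonal), invert $\cos$ on $[0,\pi/2]$ by continuous functional calculus using the a priori bound $\|X_{P,Q}\|\le \pi/2$ to pin the branch, and conclude $\|X_{P,Q}\|=\sup\{h(t):t\in\sigma(|B|)\}=h(\min\sigma(|B|))$ with $h(t)=\arctan(1/t)$, $h(0)=\pi/2$. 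What your approach buys is uniformity and rigor: it covers arbitrary $B$ with trivial kernel and dense range in one stroke, with no S-decomposability hypothesis and no approximation step, and in particular it sidesteps the continuity claim $d(P_k,Q_k)\to d(P,Q)$, which the paper asserts without justification and which is the least transparent point of its argument. What the paper's approach buys is concreteness: it reuses the explicit spectral picture of $X_{P,Q}$ developed in the example and stays within the S-decomposability theme of the paper. Your closing remark that the $2\times 2$ block computation serves as a sanity check is exactly the content of the paper's first step.
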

\begin{proof}
Suppose that $B$ is S-decomposable. If $B$ is invertible, $s_n\in (\|B^{-1}\|^{-1}, \|B\|)$, and if $B$ is non invertible there exists a decreasing subsequence $s_{n_k}$ of singular values of $B$, such that $s_{n_k}\to 0$. Thus the claims follow from the previous computations. 

Suppose now $B$ arbitrary. Clearly $|B|$ can be approximated by positive invertible operators $A_k$ with finite spectrum, in particular, diagonalizable. If $B=W|B|$, then $B_k=WA_k$ approximate $B$ (as in 6.10.3). Since $B$ has trivial nullspace and dense range, $W$ is a unitary operator. Then $B_k$ are S-decomposable,  with finite singular values (increasingly ordered)$s_{k,i}$, $1\le i\le n_k$.  Note that $P=P_{R(E)}$ and $Q=P_{N(E)}$ are continuous functions of $B$. Denote by  $E_k$, $P_k=P_{R(E_k)}$ and $Q_k=P_{N(E_k)}$ the operators acting in $\l\times\s$ which correspond to $B_k$. Then
$$
d(P_k,Q_k)\to d(P,Q).
$$
From the previous case, $d(P_k,Q_k)=\tan^{-1}(\frac1{s_{k,1}})$. If $B$ is invertible, $\frac1{s_{k,1}}\to \|B^{-1}\|$. Otherwise, $\frac1{s_{k,1}}\to\infty$.
\end{proof}
\end{enumerate}
\end{ejems}

\begin{rem}
As mentioned in the beginning of Section 2,  if $T=PQ$,  there may exist many factorizations, and that there exist a {\it canonical} factorization
$$
T=P_{\overline{R(T)}}P_{N(T)^\perp}
$$
with the following minimality property: for any $\xi\in\h$, and any other factorization $T=PQ$, one has
$$
\|P_{\overline{R(T)}}\xi-P_{N(T)^\perp}\xi\|\le \|P\xi-Q\xi\|.
$$. 
In \cite{incertidumbre} it was shown that in example \ref{ejemplo1} the factorization $T=P_IQ_J$ is canonical. 

In example \ref{E} suppose that $B:\s\to\l$ has trivial nullspace and dense range. Elementary computations show that for $T=P_{R(E)}P_{N(E)}$,
$$
N(T)=R(E^*) \hbox{ and } N(T^*)= N(B^*)\times\s=\{0\}\times\s.
$$
Then $\overline{R(T)}=R(E)$ and $N(T)=N(E))$, and this decomposition is also canonical. 

Also in \cite{incertidumbre}, it was shown that $R(P_I)+R(Q_J)$ is a closed proper direct sum, therefore $P_IQ_J$ is a different example from $P_{R(E)}P_{N(E)}$, for which $R(E)+N(E)$ is the whole space.
\end{rem}

\section{Dilations of contractions}

Let $\Gamma$ be a contraction in a Hilbert space $\h_0$. P.R. Halmos showed in \cite{halmos brasil}, that $\Gamma$ is the $1,1$ corner of a unitary operator $U$ acting in $\h_0\times\h_0$, namely
$$
U=\left( \begin{array}{cc} \Gamma & (1-\Gamma\Gamma^*)^{1/2} \\ (1-\Gamma^*\Gamma)^{1/2} & -\Gamma^* \end{array} \right).
$$
If 
$$
P=P_\Gamma=\left( \begin{array}{cc} 1 & 0 \\ 0 & 0 \end{array} \right)  \ \hbox{ and } \ Q=Q_\Gamma=U^*PU= \left( \begin{array}{cc} \Gamma^*\Gamma  & \Gamma^*(1-\Gamma\Gamma^*)^{1/2} \\ (1-\Gamma\Gamma^*)^{1/2}\Gamma & 1-\Gamma\Gamma^* \end{array} \right) 
$$
then
$$
\left( \begin{array}{cc} \Gamma & 0 \\ 0 & 0 \end{array} \right) =UQ_\Gamma P_\Gamma,
$$
i.e. $\Gamma$ factors as a unitary operator times a product of projections, on a bigger space.
Apparently, $\Gamma$ is S-decomposable in $\h$ if and only if $QP$ is decomposable in $\h\times\h$

Moreover, if
$$
\Gamma=\sum_{n\ge 1} s_n \langle\ \ \ , \xi_n\rangle\psi_n,
$$
then
$$
QP=\sum_{n\ge 1} s_n \langle\ \ \ \ , \left( \begin{array}{c} \xi_n \\ 0 \end{array} \right)\rangle \left( \begin{array}{c} s_n\xi_n \\ (1-s_n^2)^{1/2} \psi_n \end{array} \right).
$$

\begin{lem}
On $\h\times\h$, one has that 
$$
R(P)\cap R(Q)=N(1-\Gamma^*\Gamma)\oplus 0 , \ N(P)\cap N(Q)=\{0\}\oplus N(1-\Gamma\Gamma^*)
$$
and
$$
R(P)\cap N(Q)=N(\Gamma)\oplus \{0\} , \ N(P)\cap R(Q)=\{0\}\oplus N(\Gamma^*).
$$
\end{lem}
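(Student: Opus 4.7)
The plan is to parameterize $R(Q)$ and $N(Q)$ explicitly through the unitary $U$, then read off each of the four intersections by imposing the appropriate coordinate-vanishing condition.

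First I would note that $R(P) = \h\times\{0\}$ and $N(P) = \{0\}\times\h$, and that since $Q = U^*PU$, one has $R(Q) = U^*(R(P))$ and $N(Q) = U^*(N(P))$. Computing with the explicit form of $U^*$, this yields the parameterizations
$$
R(Q) = \{(\Gamma^*\xi,\,(1-\Gamma\Gamma^*)^{1/2}\xi) : \xi\in\h\}, \qquad
N(Q) = \{((1-\Gamma^*\Gamma)^{1/2}\eta,\,-\Gamma\eta) : \eta\in\h\}.
$$

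With these parameterizations each of the four intersections reduces to a short computation. For instance, a vector in $R(P)\cap R(Q)$ has the form $(\Gamma^*\xi,0)$ with $(1-\Gamma\Gamma^*)^{1/2}\xi=0$; by the continuous functional calculus, $N((1-\Gamma\Gamma^*)^{1/2}) = N(1-\Gamma\Gamma^*)$, so $\Gamma\Gamma^*\xi=\xi$, and then $v:=\Gamma^*\xi$ satisfies $\Gamma^*\Gamma v = \Gamma^*\Gamma\Gamma^*\xi = \Gamma^*\xi = v$, placing $v$ in $N(1-\Gamma^*\Gamma)$; conversely any such $v$ is realized by taking $\xi=\Gamma v$, which lies in $N(1-\Gamma\Gamma^*)$. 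The case $N(P)\cap N(Q)$ is handled symmetrically: $(1-\Gamma^*\Gamma)^{1/2}\eta=0$ forces $\Gamma^*\Gamma\eta=\eta$, hence $w:=-\Gamma\eta$ satisfies $\Gamma\Gamma^*w = \Gamma\Gamma^*\Gamma\eta = \Gamma\eta$, so $w\in N(1-\Gamma\Gamma^*)$ (with converse $\eta=-\Gamma^*w$).

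The remaining two cases use the additional identity $N((1-\Gamma^*\Gamma)^{1/2}|_{N(\Gamma)^c}) $ differently: if $(v,0)\in R(P)\cap N(Q)$, so $(v,0) = ((1-\Gamma^*\Gamma)^{1/2}\eta,-\Gamma\eta)$, then $\Gamma\eta=0$, i.e.\ $\eta\in N(\Gamma) = N(\Gamma^*\Gamma)$, so $\Gamma^*\Gamma\eta=0$ and therefore $(1-\Gamma^*\Gamma)^{1/2}\eta = \eta$; thus $v=\eta\in N(\Gamma)$, with the converse trivial. Symmetrically, an element $(0,w)\in N(P)\cap R(Q)$ gives $\Gamma^*\xi=0$, so $\Gamma\Gamma^*\xi=0$, $(1-\Gamma\Gamma^*)^{1/2}\xi=\xi$, and $w=\xi\in N(\Gamma^*)$.

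There is no real obstacle here; the lemma is a direct linear computation once the parameterizations of $R(Q)$ and $N(Q)$ are in hand. The only point that requires minor care is the repeated use of the functional-calculus identity $N((1-T)^{1/2}) = N(1-T)$ (for $T=\Gamma^*\Gamma$ and $T=\Gamma\Gamma^*$) and the standard fact $N(\Gamma)=N(\Gamma^*\Gamma)$, together with the intertwining $\Gamma\cdot\Gamma^*\Gamma=\Gamma\Gamma^*\cdot\Gamma$ used to verify each reverse inclusion.
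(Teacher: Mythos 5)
Your proof is correct and is essentially the same direct computation as the paper's: the paper tests the conditions $Q\xi=\xi$ and $Q\xi=0$ on vectors $\xi=(\xi_1,0)\in R(P)$ using the explicit matrix of $Q$ (together with the intertwining $(1-\Gamma\Gamma^*)^{1/2}\Gamma=\Gamma(1-\Gamma^*\Gamma)^{1/2}$ and $N(\Gamma^*\Gamma)=N(\Gamma)$), whereas you equivalently parameterize $R(Q)=U^*(R(P))$ and $N(Q)=U^*(N(P))$ and impose the coordinate-vanishing conditions. The only blemish is a dropped minus sign in the $N(P)\cap N(Q)$ case, where the chain should read $\Gamma\Gamma^*w=-\Gamma\Gamma^*\Gamma\eta=-\Gamma\eta=w$; the conclusion $w\in N(1-\Gamma\Gamma^*)$ is unaffected.
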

\begin{proof}
A vector in $R(P)$ is of the form $\xi=\left( \begin{array}{c} \xi_1 \\ 0 \end{array} \right)$.  $Q\xi=\xi$ if and only if 
$$
\Gamma^*\Gamma\xi_1=\xi_1 \hbox{ and } (1-\Gamma\Gamma^*)^{1/2}\Gamma\xi_1=0.
$$
Note that $(1-\Gamma\Gamma^*)^{1/2}\Gamma=\Gamma(1-\Gamma^*\Gamma)^{1/2}$. Thus $\Gamma^*\Gamma\xi_1=\xi_1$  implies that $(1-\Gamma^*\Gamma)^{1/2}\xi_1=0$.

A vector $\xi\in R(P)$ belongs to $N(Q)$ if and only if
$\Gamma^*\Gamma\xi_1=0$, i.e., $\xi_1\in N(\Gamma)$.

The other two statements are similar.
\end{proof}
\begin{rem}
Straightforward computations show that 
$$
(P+Q-1)^2=\left( \begin{array}{cc} \Gamma^*\Gamma & 0 \\ 0 & \Gamma\Gamma^* \end{array} \right) \hbox{ and thus } |P+Q-1|=\left( \begin{array}{cc} (\Gamma^*\Gamma)^{1/2} & 0 \\ 0 & (\Gamma\Gamma^*)^{1/2} \end{array} \right) .
$$
Suppose that $N(\Gamma)=N(\Gamma^*)=\{0\}$ (i.e., $P+Q-1$ has trivial nullspace and dense range).
If $\Gamma=W|\Gamma|=|\Gamma^*|W$ are the polar decompositions (with $W$ a unitary operator), then
$$
V=\left( \begin{array}{cc} |\Gamma| & W^*(1-\Gamma\Gamma^*)^{1/2} \\ (1-\Gamma\Gamma^*)^{1/2}W & -|\Gamma^*| \end{array} \right)
\hbox{ and } e^{iX_{P,Q}}=\left( \begin{array}{cc} |\Gamma| & -W^*(1-\Gamma\Gamma^*)^{1/2} \\ (1-\Gamma\Gamma^*)^{1/2}W & |\Gamma^*| \end{array} \right)
$$
With similar computations as above, one sees that if $\Gamma$ is S-decomposable with singular values $0<s_n\le 1$, then 
the spectrum of $X_{P,Q}$ is $\{\pm\theta_n: cos(\theta_n)=s_n\}$. 
With an  argument as in Corollary \ref{D1}, one has:
\begin{coro} Let $\Gamma$ be a contraction in $\h_0$ with trivial nullspace and dense range, and $P_\Gamma$, $Q_\Gamma$ the above projections in $\h_0\times\h_0$.  
\begin{enumerate}
\item
If $\Gamma$ is invertible, then 
$$
d(P_\Gamma,Q_\Gamma)=\cos^{-1}(\|\Gamma^{-1}\|^{-1}).
$$
\item
If $\Gamma$ is non invertible, then
$$
d(P_\Gamma,Q_\Gamma)=\pi/2.
$$
\end{enumerate}
\end{coro}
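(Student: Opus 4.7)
The plan is to follow the strategy of Corollary~\ref{D1}: first establish the result when $\Gamma$ is S-decomposable, using the spectral description of $X_{P_\Gamma,Q_\Gamma}$ noted in the remark just above, and then reduce the general case to this one by norm-approximating $\Gamma$ by invertible S-decomposable operators.

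As a preliminary check, the hypothesis $N(\Gamma)=N(\Gamma^*)=\{0\}$ forces $N(P_\Gamma+Q_\Gamma-1)=\{0\}$: the lemma preceding the corollary identifies $R(P_\Gamma)\cap N(Q_\Gamma)=N(\Gamma)\oplus\{0\}$ and $N(P_\Gamma)\cap R(Q_\Gamma)=\{0\}\oplus N(\Gamma^*)$, whose direct sum equals $N(P_\Gamma+Q_\Gamma-1)$. Hence the exponent $X_{P_\Gamma,Q_\Gamma}$ is defined and $d(P_\Gamma,Q_\Gamma)=\|X_{P_\Gamma,Q_\Gamma}\|$ by~(\ref{distancia geodesica}). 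If moreover $\Gamma$ is S-decomposable with singular values $\{s_n\}$, the remark just preceding the statement gives $\sigma(X_{P_\Gamma,Q_\Gamma})=\{\pm\theta_n:\cos\theta_n=s_n\}$, hence
$$
d(P_\Gamma,Q_\Gamma)=\sup_n\arccos(s_n)=\arccos\bigl(\inf_n s_n\bigr).
$$
When $\Gamma$ is invertible, $\inf_n s_n=\|\Gamma^{-1}\|^{-1}$, yielding~(1); when not, $\inf_n s_n=0$ (otherwise $|\Gamma|$ would be bounded below), so the value is $\pi/2$, yielding~(2).

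For general $\Gamma$, write $\Gamma=W|\Gamma|$ with $W$ unitary (since both $\Gamma$ and $\Gamma^*$ have trivial kernel and dense range). Approximate $|\Gamma|$ in operator norm by positive invertible operators $A_k$ with finite spectrum, constructed via the continuous functional calculus on $|\Gamma|$ (simple step functions truncated away from $0$), and set $\Gamma_k=WA_k$. Each $\Gamma_k$ is invertible and S-decomposable with singular values the eigenvalues of $A_k$, and $\Gamma_k\to\Gamma$ uniformly. By construction $P_{\Gamma_k}=P_\Gamma$, and the matrix formulas for $Q_{\Gamma_k}$ give $Q_{\Gamma_k}\to Q_\Gamma$ in norm. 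The preceding case yields $d(P_{\Gamma_k},Q_{\Gamma_k})=\arccos(\|\Gamma_k^{-1}\|^{-1})$, and combined with the continuity of the distance (discussed below) this passes to the limit. If $\Gamma$ is invertible then $\|\Gamma_k^{-1}\|\to\|\Gamma^{-1}\|$, giving~(1); otherwise $0\in\sigma(|\Gamma|)$ forces $\|A_k^{-1}\|\to\infty$ (by a standard approximation estimate using unit vectors where $|\Gamma|$ is almost vanishing), so the limit is $\arccos(0)=\pi/2$, giving~(2).

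The main obstacle is justifying the continuity $d(P_{\Gamma_k},Q_{\Gamma_k})\to d(P_\Gamma,Q_\Gamma)$. The cleanest route is the explicit matrix expression for $e^{iX_{P_\Gamma,Q_\Gamma}}$ displayed in the remark preceding the corollary: because the polar part $W$ is the same for every $\Gamma_k$ by construction, the entries of that matrix depend norm-continuously on $\Gamma_k$ through continuous functions of $|\Gamma|$ and $|\Gamma^*|$, so $e^{iX_{P_{\Gamma_k},Q_{\Gamma_k}}}\to e^{iX_{P_\Gamma,Q_\Gamma}}$ in norm. Since every $\|X_{P_{\Gamma_k},Q_{\Gamma_k}}\|$ and $\|X_{P_\Gamma,Q_\Gamma}\|$ is bounded by $\pi/2$, the spectra of these unitaries lie in the closed right half of the unit circle, where the principal logarithm is continuous; hence $\|X_{P_{\Gamma_k},Q_{\Gamma_k}}\|\to\|X_{P_\Gamma,Q_\Gamma}\|$, completing the proof.
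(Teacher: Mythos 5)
Your proposal is correct and follows essentially the same route as the paper, which disposes of this corollary ``with an argument as in Corollary \ref{D1}'': first the S-decomposable case via the spectral description $\sigma(X_{P_\Gamma,Q_\Gamma})=\{\pm\theta_n:\cos\theta_n=s_n\}$, then approximation of $|\Gamma|$ by positive invertible operators with finite spectrum keeping the polar part $W$ fixed. You in fact supply more detail than the paper does on the continuity of the geodesic distance (via norm-continuity of $e^{iX}$ and of the principal logarithm on the closed right half-circle), a step the paper only asserts.
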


\end{rem}

{\sc (Esteban Andruchow)} {Instituto de Ciencias,  Universidad Nacional de Gral. Sar\-miento,
J.M. Gutierrez 1150,  (1613) Los Polvorines, Argentina and Instituto Argentino de Matem\'atica, `Alberto P. Calder\'on', CONICET, Saavedra 15 3er. piso,
(1083) Buenos Aires, Argentina.}

e-mail: eandruch@ungs.edu.ar

{\sc (Gustavo Corach)} {Instituto Argentino de Matem\'atica, `Alberto P. Calder\'on', CONICET, Saavedra 15 3er. piso, (1083) Buenos Aires, Argentina, and Depto. de Matem\'atica, Facultad de Ingenier\'\i a, Universidad de Buenos Aires, Argentina.}

e-mail: gcorach@fi.uba.ar
\end{document}